\title[A new approach to defining cochain complexes for Dendriform and pre-Lie algebras]{A new approach to defining cochain complexes for \\ dendriform and pre-Lie algebras}
\authors{Hassan Alhussein}
\abstract{%
    Our constructions provide a systematic way to study cohomology of pre-algebraic structures via classical cohomology, simplifying computations and enabling the use of established techniques.
    }
\keywords{Dendriform algebra, pre-Lie algebra, Cohomology, Cochain complex, Hochschild complex}
\begin{document}

\section{Introduction}

Algebraic structures and their cohomology theories play a fundamental role in understanding deformations, extensions, and classification problems in mathematics. Among these structures, Perm algebras, pre-associative algebras, and pre-Lie algebras have garnered significant interest due to their connections with associative algebras, Lie algebras, and operad theory. This paper explores the cohomology theories of these algebras and establishes relationships between them via tensor product constructions.

A {Perm algebra} is an algebra satisfying the permutation identity:  
\[
(a \cdot_A b) \cdot_A c = a \cdot_A (b \cdot_A c) = a \cdot_A (c \cdot_A b).
\]
The cohomology of Perm algebras, introduced in \cite{Gnedbaye2020}, provides a framework for studying deformations and extensions. A key result is the embedding of Perm cochain complexes into Hochschild cochain complexes, inducing a map in cohomology that is an isomorphism in low degrees but not necessarily surjective in higher degrees.

A {pre-associative algebra} (or {dendriform algebra}) is equipped with two operations $\prec$ and $\succ$ that split the associativity of the total product $a \cdot_B b = a \prec b + a \succ b$. Such algebras arise in the study of Rota-Baxter algebras, shuffle algebras, and combinatorial Hopf algebras \cite{Loday}. Their cohomology, defined via a complex of multilinear maps satisfying compatibility conditions, encodes deformation theory and operadic structures \cite{Das}.

A {pre-Lie algebra} (or {right-symmetric algebra}) satisfies the identity:  
\[
(a \cdot_P b) \cdot_P c - a \cdot_P (b \cdot_P c) = (a \cdot_P c) \cdot_P b - a \cdot_P (c \cdot_P b).
\]
These algebras appear in deformation quantization \cite{Gerstenhaber}. The cohomology of pre-Lie algebras, introduced in \cite{Dzhumadil’daev}, involves a differential that accounts for both the pre-Lie product and the induced Lie bracket.

    In \cite{kolesnikov}, it is shown that for a given free Perm algebra \(A\) and algebra \(B\) with two bilinear operations \(\prec, \succ\), 
the tensor product \(A \otimes B\) is associative if and only if \((B, \prec, \succ)\) is a pre-associative algebra.
      Similarly in \cite{GubarevKolesnikov}, for a free Perm algebra \(A\) and  algebra \(P\) with a bilinear operation \(\cdot_P\), 
the tensor product \(A \otimes P\) is a Lie algebra if and only if \((P, \cdot_P)\) is a pre-Lie algebra.

 In this paper, we construct explicit injective cochain maps that embed the cohomology of pre-associative and pre-Lie algebras into the cohomology of tensor products with free Perm algebras, revealing a new structural connection between these theories:
\begin{itemize}
    \item When $A$ is a free Perm algebra: from pre-associative cohomology $C^*_{\text{dend}}(B,N)$ to Hochschild cohomology $C^*_{\text{Hoch}}(A \otimes B, A \otimes N)$.
    \item From pre-Lie cohomology $C^*_{\text{Pre}}(P,N)$ to Lie algebra cohomology $C^*_{\text{Lie}}(A \otimes P, A \otimes N)$.
\end{itemize}

These embeddings induce long exact sequences relating the respective cohomologies, providing a tool to compare deformation theories.

This injective cochain maps reduce the computation of dendriform (pre-associative) and pre-Lie cohomologies to classical Hochschild and Lie cohomologies.

\section{Right Perm algebras and pre-associative  algebras}

\begin{definition}[\!\!\cite{Chapoton2001}]
An associative algebra $(A, \cdot_A)$ is called a right \textbf{Perm algebra} if it satisfies the permutation identity:
\[
(a \cdot_A b) \cdot_A c = a \cdot_A (b \cdot_A c)=  a \cdot_A (c \cdot_A b),\quad  a,b,c \in A .
\]
%\end{definition}
%\begin{definition}[\cite{Gnedbaye2020}]
A representation of $A$ is an $A$-bimodule $M$ 
satisfying 
\[
    (mb)c=m(bc)=m(cb), \quad
    (am)b=a(mb)=(ab)m
    \]
for all $m\in M$, $a,b\in A$.
\end{definition}

Let $(A, \cdot)$ be a Perm algebra
with a representation on an $A$-bimodule $M$.
The \textbf{Perm cochain complex} 
$(C^*_{\text{perm}}(A,M), \delta_{\text{perm}})$
is given by:
\[
C_{\text{perm}}^n=C^n_{\text{perm}}(A,M) :=\{f\in Hom_{\Bbbk}(A^{\otimes n}, M)\quad \text{satisfying (1), (2), (3), (4)\ and\ (5)}\}
\]
\begin{align}
& af(a_{\gamma(1)},\dots,a_{\gamma(n)}) = af(a_{1},\dots,a_{n});\quad \gamma \in S_n,
\\
& f(a_1,\dots,ba_i,\dots,a_n)= f(a_1,\dots,a_ib,\dots,a_n);\quad i=2,\dots,n,
\\
& af(a_1b,a_2,\dots,a_n)=af(ba_1,a_2,\dots,a_n),\\
& abf(a_1,\dots,a_n)=aa_jf(a_1,\dots,a_{j-1},b,a_{j+1},\dots,a_n),
\\
& af(a_1,\dots,a_{j-1}b,\dots,a_n)=af(a_1b,a_2,\dots,a_n).
\end{align}
The differential 
$\delta_{\text{perm}}: C_{\text{perm}}^n \to C_{\text{perm}}^{n+1}$ is defined by the same rule as the classical Hochschild differential $\delta_{Hoch}$:
\begin{align*}
(\delta_{\text{perm}} f)(a_1, \dots, a_{n+1}) &
= a_1 f(a_2, \dots, a_{n+1}) \\
&+ \sum_{i=1}^n (-1)^i f(a_1, \dots, a_i \cdot a_{i+1}, \dots, a_{n+1}) \\
&+ (-1)^{n+1} f(a_1, \dots, a_n) a_{n+1}
\end{align*}
The \textbf{Perm cohomology} is $H^*_{\text{perm}}(A,M) = H(C^*_{\text{perm}}(A,M), \delta_{\text{perm}})$ .

1-cohomology corresponds to outer derivations, 2-cohomology describes abelian extensions; for details see \cite{Gnedbaye2020}.
\begin{definition}[see \cite{Loday}]\label{defn:Pre-As}
An algebra $(B, \prec, \succ)$ with two binary operations is called a \textbf{pre-associative  algebra} (or dendriform algebra; also known in the literature as a non-commutative half-shuffle algebra) if the following identities hold for all $x,y,z \in B$:
\begin{align*}
(x \prec y) \prec z &= x \prec (y \prec z + y \succ z), \\
(x \succ y) \prec z &= x \succ (y \prec z), \\
x \succ (y \succ z) &= (x \prec y + x \succ y) \succ z.
\end{align*}
In particular, the \textbf{total product} $x \cdot_B y := x \prec y + x \succ y$ is associative.
\end{definition}

\begin{example}\label{exp:dend}
Let    $B$ be an $n$-dimensional pre-associative algebra with basis $\{e_1,\dots,e_n\}$ and operations:
\begin{align*}
e_i \prec e_1 &= e_i \ \ \textit{for all $i=2,\dots,n$,}\\
e_1 \succ e_1 &= e_1,
\end{align*}
All other products are zero.
\end{example}

\begin{definition}[see \cite{Loday}]
A representation of a pre-associative algebra $B$ on a vector 
space $N$ is defined as two left actions and two right actions satisfying nine obvious identities similar to those in Definition \ref{defn:Pre-As}.
\end{definition}
The notion of a cohomology for pre-associative algebras
was proposed by \cite{Das}. 
In this section, we recall the original definition which is quite technical. In Section~3, we show that when \(A\) is a free Perm algebra, there exists an injective cochain map
\[
\Psi \colon C^*_{\mathrm{dend}}(B,N) \longrightarrow C^*_{\mathrm{Hoch}}(A \otimes B, A \otimes N)
\]
from the pre-associative complex of \(B\) into the Hochschild complex of \(A \otimes B\). 
This embedding not only simplifies computations but also establishes a universal connection between pre-associative and Hochschild cohomologies, which we regard as a simpler and more conceptual approach.

Denote by $C_n = \{1, 2,\dots, n\}$ the set of the first $n$ natural numbers.
For any $m, n \geq 1$ and $1 \leq i \leq m$, define the maps $R_0,R_1,\dots,R_m$ as follows:
\[
R_0(m,i,n): C_{m+n-1} \to C_m,
\]
\[
R_0(m,i,n)(r)=
\begin{cases}
r, & r\leq i-1,\\
i, & i\leq r \leq i+n-1,\\
r-n+1, & i+n\leq r\leq m+n-1;
\end{cases}
\]
\[
R_i(m,i,n): C_{m+n-1} \to \Bbbk[C_m],
\]
\[
R_i(m,i,n)(r)=
\begin{cases}
1+\dots+n, & r\leq i-1,\\
r-i+1, & i\leq r \leq i+n-1,\\
1+\dots+n, & i+n\leq r\leq m+n-1.
\end{cases}
\]

For a pre-associative algebra $(B, \prec, \succ )$, define  
$\pi_B: \Bbbk [C_2]\otimes B\otimes B\to B$ 
by
\[
\pi_B(r,a,b)=
\begin{cases}
    a\prec b, & r=1,\\
    a\succ b, &  r=2.
\end{cases}
\]
To describe a representation of $B$ on $N$ in a more compact form, we may define two maps 
\[
\theta_1: \Bbbk[C_2]\otimes (B\otimes N)\to N,
\quad 
\theta_2:\Bbbk[C_2]\otimes (N\otimes B)\to N
\]
as follows:
\[
\theta_1(r,a,n)=
\begin{cases}
    a\prec  n, & r=1,\\
    a \succ n, &  r=2;
\end{cases}
\quad
\theta_2(r,n,a)=
\begin{cases}
    n\prec a, & r=1,\\
    n\succ a, &  r=2.
\end{cases}
\]

\begin{definition}[\!\!\cite{Das}]\label{defn:Pre-As-Cochain}
Let $(B, \prec, \succ)$ be a pre-associative algebra and 
$N$ be a representation of $B$. 
We define the space $C^n_{dend}(B,N)$
of $n$-cochains of $B$ with coefficients in $N$ by
\[
C^n_{dend}(B,N)=Hom_\Bbbk(\Bbbk [C_n]\otimes B^{\otimes_n},N).
\]
The differential 
\[
\delta_{{dend}}: C^n_{dend}(B,N) \to C^{n+1}_{dend}(B,N) 
\]
is given by
\begin{multline}
(\delta_{{dend}}f)(r,a_1,\dots,a_{n+1})=\theta_1(R_0(2,1,n)(r),a_1,f(R_2(2,1,n)(r),a_2,\dots,a_{n+1})) \\
+\sum_{i=1}^n(-1)^i
 f(R_0(n,i,2)(r),a_1,\dots,a_{i-1},\pi_A(R_i(n,i,2)(r),a_i,a_{i+1}), a_{i+2},\dots,a_{n+1})  \\
+ (-1)^{n+1}\theta_2(R_0(2,n,1)(r),f(R_1(2,n,1)(r),a_1,\dots,a_{n}),a_{n+1}),
\end{multline}
for $r\in C_{n+1}$ and $a_1,\dots,a_{n+1}\in B$. 
The corresponding cohomology groups are denoted by 
$H_{dend}^n (B, N)$, for $n \geq 1 $.
\end{definition}

 The  pre-associative differential 
may be expressed via
\begin{longtable}{@{}l@{}l@{}}

$\displaystyle (\delta_{dend}f)$
& $(1,b_1,\dots,b_{n+1}) =\displaystyle \sum_{i=1}^n a_1\prec f(i,b_2,\dots,b_{n+1}) - f(1,b_1\prec b_2,\dots,b_{n+1})$ \\[4pt]
& $\displaystyle + \sum_{i=2}^n(-1)^i f(1,b_1,\dots, b_i*b_{i+1},\dots,b_{n+1}) + (-1)^{n+1} f(1,b_1,b_2,\dots,b_n)\prec b_{n+1}$ \\[4pt]

& \\[8pt]

$\displaystyle (\delta_{dend}f)$
& $(2,b_1,\dots,b_{n+1}) =\displaystyle b_1\succ f(1,b_2,\dots,b_{n+1}) - f(1,b_1\succ b_2,b_3,\dots,b_{n+1})$ \\[4pt]
& $\displaystyle + f(2,b_1,b_2\prec b_3,\dots,b_{n+1}) + \sum_{i=3}^{n}(-1)^i f(2,b_1,\dots, b_i*b_{i+1},\dots,b_{n+1})$ \\[4pt]
& $\displaystyle + (-1)^{n+1} f(2,b_1,\dots,b_n)\prec b_{n+1}$
\end{longtable}

   \[
   \begin{aligned}
  (\delta_{dend}f)(r,b_1, \dots,&  b_{n+1}) =   \sum_{i=1}^{r-1}b_1\succ f(r-1,b_2,\dots,b_{n+1})\\
   &+\sum_{i=1}^{r-2}(-1)^if(r-1,b_1,\dots, b_i*b_{i+1},\dots,b_{n+1})\\
    &+(-1)^{r-1} f(r-1,b_1,\dots,b_{r-1}\succ b_r,\dots,b_{n+1})\\
     &+(-1)^{r} f(r,b_1,\dots,b_{r}\prec b_{r+1},\dots,b_{n+1})\\
     &+ \sum_{i=r+1}^{n}(-1)^if(r,b_1,\dots, b_i*b_{i+1},\dots,b_{n+1})\\
     &+\sum_{i=r+1}^{n}(-1)^{n+1}f(r,b_1,\dots,b_{n})\prec b_{n+1}\\
     (\delta_{dend}f)(n+1,b_1, \dots,&  b_{n+1}) = b_1\succ f(n,b_2,\dots,b_{n+1})\\
     &+\sum_{i=1}^{n-1}(-1)^i f(n,b_1,\dots, b_i*b_{i+1},\dots,b_{n+1})\\
     &+(-1)^{n} f(n,b_1,b_2,\dots,b_n\succ b_{n+1})\\
     &+\sum_{i=1}^n(-1)^{n+1}f(i,b_1,b_2,\dots,b_n)\succ b_{n+1}.
\end{aligned}
\]
for all $r=3,\dots,n$.

In particular, for $f\in C^1_{dend}(B,N)$ we have
 \begin{align*}
& (\delta_{{dend}}f)(1;a,b)=a\prec f(1,b)
-f(1,a\prec b)+(1,a)\prec b,\\
& (\delta_{{dend}}f)(2,a,b)=a\succ f(1,b)
-f(1,a\succ b)+f(1,a)\succ b.
\end{align*} 

Similarly, if $f\in C^2_{dend}(B,N)$ then
\begin{equation}
\begin{aligned}
 (\delta_{{dend}}f)(1;a,b,c) &{}= a\prec f(1,b,c)+a\prec f(2,b,c)-f(1,a\prec b,c) \\
 &{} + f(1,a,b\prec c)+f(1,a,b\succ c)-f(1,a,b)\prec c,\\
(\delta_{{dend}}f)(2;a,b,c) &{} =a\succ f(1,b,c)-f(1,a\succ b,c)+f(2,a,b\prec c) - f(2,a,b)\prec c,\\
(\delta_{{dend}}f)(3;a,b,c) &{}=a\succ f(2,b,c)-f(2,a\succ b,c)-f(2,a\prec b,c)\\ 
&{}+ f(2,a,b\succ c)-f(1,a,b)\succ c-f(2,a,b)\succ c.
\end{aligned}
\end{equation}

\begin{example}\label{2.7}
Let us compute \(H^2(B,B)\), where \(B\) is the pre-associative algebra defined in \ref{exp:dend}.

A 2-cochain \(f \in C^2(B,B)\) is defined by bilinear functions:
\[
f(1, e_i, e_j) = \sum_{k=1}^n \alpha_{ij}^k e_k, \quad f(2, e_i, e_j) = \sum_{k=1}^n \beta_{ij}^k e_k.
\]

The cocycle condition \(\delta_{\text{dend}} f = 0\) is given by the following three equations for all \(a,b,c \in B\):
\[
\begin{aligned}
(\delta_{dend} f)(1;a,b,c) &= a \prec f(1,b,c) + a \prec f(2,b,c) - f(1,a \prec b,c) \\
&\quad + f(1,a,b \prec c) + f(1,a,b \succ c) - f(1,a,b) \prec c, \\
(\delta_{dend} f)(2;a,b,c) &= a \succ f(1,b,c) - f(1,a \succ b,c) \\
&\quad + f(2,a,b \prec c) - f(2,a,b) \prec c, \\
(\delta_{dend} f)(3;a,b,c) &= a \succ f(2,b,c) - f(2,a \succ b,c) - f(2,a \prec b,c) \\
&\quad + f(2,a,b \succ c) - f(1,a,b) \succ c - f(2,a,b) \succ c.
\end{aligned}
\]

Evaluating these conditions on all basis triples yields the general solution for a cocycle:
\[
\begin{aligned}
\alpha^1_{11}&=0, & \alpha^k_{1j}&=0,\ \forall j\geq 2,\ k\geq 1\\
\alpha^k_{i1}&=0,\ \forall i\geq 2, k\geq 1 & \alpha^k_{ij}&=0,\ \forall i,j,k\geq 2\\
\beta^1_{11}&=0, & \beta^k_{1j}&=0,\ \forall j\geq 2,\ k\geq 1\\
\beta^k_{i1}&=0,\ \forall i\geq 2, k\geq 1 & \beta^k_{ij}&=0,\ \forall i,j,k\geq 2\\
\alpha^k_{11}&+\beta^k_{11}=0,\ \forall k\geq 2.
\end{aligned}
\]
Thus, the space of 2-cocycles \(Z^2(B,B)\) is spanned by maps of the form:
\[
f(1, e_1, e_1) = \sum_{k=2}^n\alpha_{11}^k e_k, \quad f(2, e_1, e_1) = -\sum_{k=2}^n\alpha_{11}^k e_k.
\]

Now, let \(g \in C^1(B,B)\) be a 1-cochain defined by \(g(1, e_i) = \sum_{k=1}^n c_i^k e_k\). Its coboundary \(\delta_{\text{dend}} g\) is:
\[
\begin{aligned}
(\delta_{\text{dend}} g)(1; e_1, e_1) &= \sum_{k=2}^n c_1^k e_k, \\
(\delta_{\text{dend}} g)(2; e_1, e_1) &= -\sum_{k=1}^n c_1^k e_k.
\end{aligned}
\]

Comparing with the general cocycle, we find that every cocycle \(f\) is also a coboundary \(\delta_{\text{dend}} g\), achieved by setting \(c_1^1 = 0\) and \(c_1^k = \alpha_{11}^k\) for \(k \geq 2\). Therefore, \(H^2(B,B) = 0\).
\end{example}
\section{Associative and pre-associative   algebra}
\begin{theorem}[see {\cite[Example 2]{kolesnikov}}]\label{4.1}
Let $(A, \cdot_A)$ be a  Perm algebra and $B$ be a vector space equipped with two bilinear operations $\prec, \succ : B \otimes B \to B$. 
Define the tensor product algebra $A \otimes B$ with product
\[
(a_1 \otimes b_1) \cdot (a_2 \otimes b_2) := (a_1 \cdot_A a_2) \otimes (b_1 \prec b_2) + (a_2 \cdot_A a_1) \otimes (b_1 \succ b_2).
\]
Then:
\begin{enumerate}
    \item If $(B, \prec, \succ)$ is a pre-associative algebra, then $A \otimes B$ is associative.
    \item Conversely, if $A \otimes B$ is associative and $A$ is \textbf{free as a Perm algebra}, then $(B, \prec, \succ)$ is a pre-associative algebra.
\end{enumerate}
In particular, when $A$ is a free Perm algebra, $A \otimes B$ is associative \textbf{if and only if} $(B, \prec, \succ)$ is a pre-associative algebra.
\end{theorem}
\begin{theorem}
Let $A$ be a free Perm algebra, let $(B,\prec,\succ)$ be a pre-associative  algebra, and let $N$ be a vector space. Then the tensor product $A \otimes N$ with action
\[
(a \otimes n) \cdot (b \otimes c) := (a  b) \otimes (n \prec c) + (b  a)\otimes(n \succ c)
\]
\[
(b \otimes c) \cdot (a \otimes n) := (b  a) \otimes (c \prec n) + (a  b)\otimes(c \succ n)
\]
is a module over $A\otimes B$ if and only if $N$ is a module over $B$. 
\end{theorem}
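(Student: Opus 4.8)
The plan is to check directly that the two prescribed actions turn $M\otimes N$ into a bimodule over the algebra $A\otimes B$, which is associative by Theorem~\ref{4.1} precisely because $B$ is pre-associative. Being a bimodule over the associative algebra $R=A\otimes B$ amounts to the three axioms
\[
(x\cdot r_1)\cdot r_2=x\cdot(r_1r_2),\qquad r_1\cdot(r_2\cdot x)=(r_1r_2)\cdot x,\qquad (r_1\cdot x)\cdot r_2=r_1\cdot(x\cdot r_2),
\]
for all $x\in M\otimes N$ and $r_1,r_2\in A\otimes B$. I would substitute $x=m\otimes n$ and $r_i=a_i\otimes b_i$, and expand both sides of each axiom using the two action formulas together with the product on $A\otimes B$ from Theorem~\ref{4.1}. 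Each side then becomes a sum of four decomposable tensors, whose first tensor factor is a length-three product of $a_1,a_2,m$ inside the $A$-bimodule $M$ and whose second factor is the matching length-three expression in $b_1,b_2,n$ built from the operations of $B$ and its actions on $N$.

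The engine of the proof is that the Perm representation identities $(mb)c=m(bc)=m(cb)$ and $(am)b=a(mb)=(ab)m$ collapse every first factor occurring in, say, the right-module axiom into just three expressions: $m(a_1a_2)$, $(a_1a_2)m$ and $(a_2a_1)m$. Concretely $(ma_1)a_2=m(a_1a_2)$, $(a_1m)a_2=(a_1a_2)m$, $a_2(ma_1)=(a_2a_1)m$ and $a_2(a_1m)=(a_2a_1)m$, while on the other side $m(a_2a_1)=m(a_1a_2)$. Collecting the terms of $M\otimes N$ by these three distinct $A$-$M$ coefficients, the right-module axiom becomes equivalent to the three equalities
\[
(n\prec b_1)\prec b_2=n\prec(b_1\cdot_B b_2),\qquad (n\succ b_1)\prec b_2=n\succ(b_1\prec b_2),\qquad (n\prec b_1+n\succ b_1)\succ b_2=n\succ(b_1\succ b_2),
\]
which are exactly the right-representation identities of $B$ on $N$. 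Carrying out the identical bookkeeping for the left-module axiom (which uses the actions $b\prec n$, $b\succ n$) and for the left-right compatibility axiom reproduces the remaining representation identities, so that the three bimodule axioms together are equivalent to the full list of nine identities defining a representation of $B$ on $N$.

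For the forward direction, assuming $N$ is a representation of $B$, the three $B$-$N$ relations hold and the coefficient-by-coefficient comparison above gives each bimodule axiom at once. For the converse I would run the comparison backwards: since each axiom must hold for all $m\in M$ and $a_1,a_2\in A$, I would choose these so that the three coefficients $m(a_1a_2)$, $(a_1a_2)m$ and $(a_2a_1)m$ are linearly independent in $M$ (as happens, e.g., when $A$ is a free Perm algebra and $M=A$, the setting of the applications in the introduction), which separates the terms and forces each bracketed $B$-$N$ identity to hold on its own.

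I expect the main difficulty to be organisational rather than conceptual: one must track which Perm identity rewrites each of the eight first-factor products in every axiom, and confirm that after rewriting the terms fall cleanly into the three coefficient classes with no surviving cross terms. The one genuinely delicate point is the converse, where the separation of the three coefficient classes requires enough nondegeneracy of the $A$-action on $M$; this is exactly where the freeness of the Perm algebra $A$ is used.
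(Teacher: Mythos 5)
Your direct verification is the right approach, and in fact it supplies content the paper itself omits: the printed proof of this theorem reads only ``Similar way to prove theorem~3.2'', a reference that points back at the theorem itself (evidently a typo for the Kolesnikov result, Theorem~3.1), so no argument is actually given. Your computation of the right-module axiom is correct as far as it goes: expanding $\bigl((m\otimes n)\cdot(a_1\otimes b_1)\bigr)\cdot(a_2\otimes b_2)$ and $(m\otimes n)\cdot\bigl((a_1\otimes b_1)(a_2\otimes b_2)\bigr)$, and collapsing the eight first factors via $(ma_1)a_2=m(a_1a_2)$, $(a_1m)a_2=(a_1a_2)m$, $a_2(ma_1)=(a_2a_1)m=a_2(a_1m)$ and $m(a_2a_1)=m(a_1a_2)$, does sort the terms into the three coefficient classes you name, and matching coefficients yields exactly the three right-representation identities of a dendriform module. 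The same bookkeeping for the other two axioms is what the intended proof (modelled on the proof of Theorem~3.1) would consist of, so your route is essentially the canonical one.

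The one substantive point is the converse, and you have correctly put your finger on a defect of the theorem as stated rather than of your argument: the ``only if'' direction is simply false for arbitrary $A$ and $M$ (take $M=0$, or $M$ with trivial action), since then the bimodule axioms on $M\otimes N$ hold vacuously without any constraint on $N$. Your fix --- choosing $A$ and $M$ so that $m(a_1a_2)$, $(a_1a_2)m$ and $(a_2a_1)m$ are linearly independent, e.g.\ $A$ free Perm acting on itself --- is exactly the nondegeneracy hypothesis under which the coefficient classes separate and the converse becomes true; this mirrors the freeness assumption the paper invokes later for injectivity of $\Psi$, but it is not present in the statement of the theorem, and any honest proof must either add it or restrict the claim to the ``if'' direction. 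You should also record explicitly that a Perm module is in particular an associative bimodule, since the rewriting $a_2(a_1m)=(a_2a_1)m$ uses associativity of the left action and not one of the two displayed Perm module identities.
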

\begin{proof}
    This follows using an argument similar to the proof of Theorem  \ref{4.1}.
\end{proof}
\begin{remark}
    The cohomology of an algebra 
$A\otimes B$
 with values in a bimodule 
$A\otimes N$ is a Hochschild cohomology.
\end{remark}
\begin{theorem}
Let \((A, \cdot_A)\) be a \textbf{Perm algebra}, let \((B, \prec, \succ)\) be a \textbf{pre-associative algebra}, and let \(N\) be a \(B\)-bimodule. Then there exists a cochain map
\[
\Psi : C^*_{\mathrm{dend}}(B, N) \rightarrow C^*_{\mathrm{Hoch}}(A \otimes B, A \otimes N)
\]
from the pre-associative cochain complex of \(B\) with coefficients in \(N\) to the Hochschild cochain complex of the tensor product algebra \(A \otimes B\) with coefficients in \(A \otimes N\).
\end{theorem}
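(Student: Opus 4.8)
The plan is to write $\Psi$ down explicitly on decomposable tensors and then to check, across the three families of terms of the Hochschild differential, that it intertwines $\delta_{\mathrm{dend}}$ and $\delta_{\mathrm{Hoch}}$. By Theorem \ref{4.1} the space $A\otimes B$ is associative, and applying Theorem \ref{thm:4.1} with $M=A$ the regular bimodule shows that $A\otimes N$ is an $(A\otimes B)$-bimodule; thus the target complex $C^*_{\mathrm{Hoch}}(A\otimes B,A\otimes N)$ is defined, and the remark following Theorem \ref{thm:4.1} identifies its cohomology as genuine Hochschild cohomology. The essential structural input is the behaviour of products in a Perm algebra: the identities $(a\cdot_A b)\cdot_A c=a\cdot_A(b\cdot_A c)=a\cdot_A(c\cdot_A b)$ imply that an iterated product depends only on its leftmost factor, its \emph{head}, and on the unordered collection of the remaining factors. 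For $a_1,\dots,a_n\in A$ I write $\mu_r(a_1,\dots,a_n)$ for the (well-defined) monomial with head $a_r$, and I read the symbol $[r]\in C_n$ as the instruction ``head in position $r$''.

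With this dictionary I define, for $f\in C^n_{\mathrm{dend}}(B,N)$,
\[
\Psi(f)\bigl((a_1\otimes b_1),\dots,(a_n\otimes b_n)\bigr)=\sum_{r=1}^{n}\mu_r(a_1,\dots,a_n)\otimes f([r],b_1,\dots,b_n),
\]
extended $\Bbbk$-linearly. This is visibly multilinear and lands in $A\otimes N$, so $\Psi(f)\in C^n_{\mathrm{Hoch}}(A\otimes B,A\otimes N)$; for $n=1,2$ one checks directly against the explicit formulas above that it reproduces the expected cochains (e.g. the coefficients of $\mu_1$ and $\mu_2$ recover $(\delta_{\mathrm{dend}}f)(1;\cdot)$ and $(\delta_{\mathrm{dend}}f)(2;\cdot)$).

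The heart of the argument is the identity $\Psi\circ\delta_{\mathrm{dend}}=\delta_{\mathrm{Hoch}}\circ\Psi$, which I verify by expanding $\delta_{\mathrm{Hoch}}\Psi(f)$ on $(a_1\otimes b_1),\dots,(a_{n+1}\otimes b_{n+1})$ via the product rule of Theorem \ref{4.1} and collecting the coefficient of each monomial $\mu_r(a_1,\dots,a_{n+1})$. Three bookkeeping facts, all consequences of the Perm identities, drive the computation. First, left multiplication $a_1\cdot_A\mu_s$ promotes $a_1$ to the head, so every term of the leftmost Hochschild summand whose $B$-part is a $\prec$-product is collected under $r=1$; this is exactly what produces the sum $\sum_{t}b_1\prec f(t,b_2,\dots,b_{n+1})$ in the $r=1$ component, while the $\succ$-parts $\mu_s\cdot_A a_1$ keep the head and distribute over $r\ge2$. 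Second, and symmetrically, right multiplication by $a_{n+1}$ preserves the head for the $\prec$-parts and promotes $a_{n+1}$ for the $\succ$-parts, producing the two boundary terms with operations $\prec$ and $\succ$. Third, merging an adjacent pair $(a_i\otimes b_i)(a_{i+1}\otimes b_{i+1})$ splits into a $\prec$-part with merged head $a_i$ and a $\succ$-part with merged head $a_{i+1}$; after feeding the result into $\Psi(f)$ and sorting by the overall head, the head-index transforms by $R_0(n,i,2)$ while the inner operation is selected by $R_i(n,i,2)$, namely $\prec$ or $\succ$ when the head sits at the merge site, and the total product $\ast$ (the $[1]+[2]$ case) when the head lies elsewhere, since then the internal order of the merged pair is invisible to the Perm monomial. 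Because $\Psi$ leaves the ordering of arguments untouched, the signs $(-1)^i$ of $\delta_{\mathrm{Hoch}}$ coincide termwise with those of $\delta_{\mathrm{dend}}$, and assembling the three contributions shows that the coefficient of $\mu_r(a_1,\dots,a_{n+1})$ equals $(\delta_{\mathrm{dend}}f)([r],b_1,\dots,b_{n+1})$.

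I expect the main obstacle to be precisely this simultaneous matching of index shifts and signs across the three families of terms: one must confirm that the collapse of distinct type-indices onto the head created by a left multiplication, the index shift caused by removing a merged slot, and the emergence of the associative total product $\ast$ whenever the head escapes a merged pair all align, coefficient by coefficient and sign by sign, with the combinatorics encoded by $R_0$ and $R_i$ in Definition \ref{defn:Pre-As-Cochain}. Once this is checked, $\Psi$ commutes with the differentials and is the desired cochain map. Note that the construction uses nothing beyond the Perm identities, so it is valid for an arbitrary Perm algebra $A$; freeness of $A$ is needed only to make $\Psi$ injective, as claimed in the introduction.
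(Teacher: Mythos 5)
Your construction of $\Psi$ is exactly the paper's: the monomial $\mu_r(a_1,\dots,a_n)$ with head $a_r$ is precisely the paper's reordered product $x_r x_{r-1}\cdots x_1 x_{r+1}\cdots x_n$, and your verification — expanding the three families of Hochschild terms, using the Perm identities to sort by head, and matching the resulting coefficient of $\mu_r$ with $(\delta_{\mathrm{dend}}f)([r],\cdot)$ via the $R_0$, $R_i$ combinatorics — is the same computation the paper carries out explicitly for $n=3$ and in general. Your phrasing via ``head plus unordered tail'' is a cleaner way to state the same argument, but the approach is essentially identical.
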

\begin{proof}
  Given a pre-associative  cochain $f \in C^n_{\mathrm{dend}}(B, N)$, for each $x_1,\dots,x_n\in A$ and each $b_1,\dots,b_n\in B$, define: 
\begin{longtable}{@{}l@{}l@{}}
$\displaystyle \Psi(f)\big((x_1 \otimes b_1, x_2 \otimes b_2,\dots,$
& $x_{n}\otimes b_{n})\big) :=\displaystyle x_1x_2\dots x_{n} \otimes f(1,b_1 , b_2,\dots,b_{n})$ \\[4pt]
& $\displaystyle + x_2x_1x_3\dots x_{n} \otimes f(2,b_1, b_2,\dots,b_{n})$ \\[4pt]
& $\displaystyle + \sum_{i=2}^{n-2}x_{i+1}x_i\dots x_{1}x_{i+2}x_{i+3}\dots x_{n}\otimes f(i+1,b_1,b_2,\dots,b_n)$ \\[4pt]
& $\displaystyle + x_nx_{n-1}\dots x_2x_1\otimes f(n,b_1,b_2,\dots,b_{n})$
\end{longtable}

We will show that $\Psi$ commutes with the  differentials:
\[
\delta_{HH}(\Psi f)=\Psi(\delta_{dend}f)
\]
For $n=3$ and $x =x_1  \otimes b_1$, $y = x_2 \otimes b_2$, $z = x_3 \otimes b_3$, we have:

\[
\delta_{HH}(\Psi f)(x,y,z) = \underbrace{x \ast (\Psi f)(y,z)}_{(1)} - \underbrace{(\Psi f)(x \ast y, z)}_{(2)} + \underbrace{(\Psi f)(x, y \ast z)}_{(3)} - \underbrace{(\Psi f)(x,y) \ast z}_{(4)}
\]

\textbf{Term (1): $x \ast (\Psi f)(y,z)$}

\begin{align*}
&= (x_1 \otimes b_1) \ast \left[x_2 \cdot_A x_3 \otimes f(1,b_2,b_3) + x_3 \cdot_A x_2 \otimes f(2,b_2,b_3)\right] \\
&= (x_1 \cdot_A (x_2 \cdot_A x_3)) \otimes (b_1 \prec f(1,b_2,b_3)) + ((x_2 \cdot_A x_3) \cdot_A x_1) \otimes (b_1 \succ f(1,b_2,b_3)) \\
&\quad + (x_1 \cdot_A (x_3 \cdot_A x_2)) \otimes (b_1 \prec f(2,b_2,b_3)) + ((x_3 \cdot_A x_2) \cdot_A x_1) \otimes (b_1 \succ f(2,b_2,b_3))
\end{align*}

\textbf{Term (2): $-(\Psi f)(x \ast y, z)$}

\begin{align*}
&= -(\Psi f)\left((x_1 \cdot_A x_2) \otimes (b_1 \prec b_2) + (x_2 \cdot_A x_1) \otimes (b_1 \succ b_2), x_3 \otimes b_3\right) \\
&= - (x_1 \cdot_A x_2) \cdot_A x_3 \otimes f(1, b_1 \prec b_2, b_3) - x_3 \cdot_A (x_1 \cdot_A x_2) \otimes f(2, b_1 \prec b_2, b_3) \\
&\quad - (x_2 \cdot_A x_1) \cdot_A x_3 \otimes f(1, b_1 \succ b_2, b_3) - x_3 \cdot_A (x_2 \cdot_A x_1) \otimes f(2, b_1 \succ b_2, b_3)
\end{align*}

\textbf{Term (3): $+(\Psi f)(x, y \ast z)$}

\begin{align*}
&= (\Psi f)\left(x_1 \otimes b_1, (x_2 \cdot_A x_3) \otimes (b_2 \prec b_3) + (x_3 \cdot_A x_2) \otimes (b_2 \succ b_3)\right) \\
&= x_1 \cdot_A (x_2 \cdot_A x_3) \otimes f(1, b_1, b_2 \prec b_3) + (x_2 \cdot_A x_3) \cdot_A x_1 \otimes f(2, b_1, b_2 \prec b_3) \\
&\quad + x_1 \cdot_A (x_3 \cdot_A x_2) \otimes f(1, b_1, b_2 \succ b_3) + (x_3 \cdot_A x_2) \cdot_A x_1 \otimes f(2, b_1, b_2 \succ b_3)
\end{align*}

\textbf{Term (4): $-(\Psi f)(x,y) \ast z$}

\begin{align*}
&= -\left[x_1\cdot_A x_2 \otimes f(1,b_1,b_2) + x_2 \cdot_A x_1 \otimes f(2,b_1,b_2)\right] \ast (x_3 \otimes b_3) \\
&= - (x_1 \cdot_A x_2) \cdot_A x_3 \otimes (f(1,b_1,b_2) \prec b_3) - x_3 \cdot_A (x_1 \cdot_A x_2) \otimes (f(1,b_1,b_2) \succ b_3) \\
&\quad - (x_2 \cdot_A x_1) \cdot_A x_3 \otimes (f(2,b_1,b_2) \prec b_3) - x_3 \cdot_A (x_2 \cdot_A x_1) \otimes (f(2,b_1,b_2) \succ b_3)
\end{align*}
Now, let's collect all terms from (1)-(4) and group them by which $(\delta_{dend} f)(i)$ they contribute to.

{Terms Contributing to $(\delta_B f)(1, b_1, b_2, b_3)$:}

\begin{itemize}
\item From (1): $b_1 \prec f(1,b_2,b_3)$ and $b_1 \prec f(2,b_2,b_3)$
\item From (2): $-f(1, b_1 \prec b_2, b_3)$
\item From (3): $+f(1, b_1, b_2 \succ b_3)$ and $+f(1, b_1, b_2 \prec b_3)$
\item From (4): $-f(1,b_1,b_2) \prec b_3$
\end{itemize}

Combined:
\begin{align*}
(\delta_{dend} f)(1, b_1, b_2, b_3) &= b_1 \prec f(1,b_2,b_3)+b_1 \prec f(2,b_2,b_3) - f(1, b_1 \prec b_2, b_3) \\
&\quad + f(1, b_1, b_2 \succ b_3) + f(1, b_1, b_2 \prec b_3) - f(1,b_1,b_2) \prec b_3
\end{align*}

\textbf{Terms Contributing to $(\delta_{dend} f)(2, b_1, b_2, b_3)$:}

\begin{itemize}
\item From (1): $b_1 \succ f(1,b_2,b_3)$
\item From (2): $-f(1, b_1 \succ b_2, b_3)$
\item From (3): $+f(2, b_1, b_2 \prec b_3)$
\item From (4): $-f(2,b_1,b_2) \prec b_3$
\end{itemize}

Combined:
\begin{align*}
(\delta_{dend} f)(2, b_1, b_2, b_3) &= b_1 \succ f(1,b_2,b_3) - f(1, b_1 \succ b_2, b_3) \\
&\quad + f(2, b_1, b_2 \prec b_3) - f(2,b_1,b_2) \prec b_3
\end{align*}

\textbf{Terms Contributing to $(\delta_{dend} f)(3, b_1, b_2, b_3)$:}

\begin{itemize}
\item From (1): $b_1 \succ f(2,b_2,b_3)$ 
\item From (2):  $-f(1, b_1 \succ b_2, b_3)$ and $-f(2, b_1 \prec b_2, b_3)$
\item From (3): $+f(2, b_1, b_2 \succ b_3)$
\item From (4): $-f(1,b_1,b_2) \succ b_3$ and $-f(2,b_1,b_2) \succ b_3$
\end{itemize}

Combined:
\begin{align*}
(\delta_{dend} f)(3, b_1, b_2, b_3) &= b_1 \succ f(2,b_2,b_3)  - f(2, b_1 \succ b_2, b_3) \\
&\quad - f(2, b_1 \prec b_2, b_3) + f(2, b_1, b_2 \succ b_3) \\
&\quad - f(1,b_1,b_2) \succ b_3 - f(2,b_1,b_2) \succ b_3
\end{align*}
By using the Perm identity, which allows reordering of monomials in $A$
 modulo the relation $(x_ix_j)x_k=x_i(x_jx_k)=x_i(x_kx_j)$, we see that:
\begin{align*}
(\delta_{HH}(\Psi(f))\big(x_1 \otimes b_1, x_2 \otimes b_2,x_{3}\otimes b_{3}\big) := x_1x_2 x_{3} \otimes (\delta_{dend}f)(1,b_1 , b_2,b_{3}) \\+ x_2x_1x_3 \otimes (\delta_{dend}f)(2,b_1, b_2,b_{3})
+x_3 x_2x_1\otimes (\delta_{dend}f)(3,b_1,b_2,b_{3})\\
=\Psi(\delta_{dend}f)\big(x_1 \otimes b_1, x_2 \otimes b_2,x_{3}\otimes b_{3}\big)
\end{align*}

\textbf{General case:}
For a cochain $\Psi f$ of degree $n$, the Hochschild differential \(\delta_{HH}(\Psi f)\) is given by:
   \[
   \begin{aligned}
   \delta_{HH}(\Psi f)(x_1 \otimes b_1, \dots, x_{n+1} \otimes b_{n+1}) = (x_1 \otimes b_1) \cdot (\Psi f)(x_2 \otimes b_2, \dots, x_{n+1} \otimes b_{n+1})\\ 
   + \sum_{i=1}^n (-1)^i (\Psi f)(x_1 \otimes b_1, \dots, (x_i \otimes b_i) \cdot (x_{i+1} \otimes b_{i+1}), \dots, x_{n+1} \otimes b_{n+1}) \\
   + (-1)^{n+1} (\Psi f)(x_1 \otimes b_1, \dots, x_n \otimes b_n) \cdot (x_{n+1} \otimes b_{n+1})
   \end{aligned}
   \]

Let us compute each term in \(\delta_{HH}(\Psi f)\) one by one.

\textbf{1. First Term: \((x_1 \otimes b_1) \cdot (\Psi f)(x_2 \otimes b_2, \dots, x_{n+1} \otimes b_{n+1})\)}

Here \(\Psi f\) applied to \(n\) arguments:
\[
\begin{aligned}
\Psi f(x_2 \otimes b_2, \dots, x_{n+1} \otimes b_{n+1}) =\\ x_2 \cdots x_{n+1} \otimes f(1, b_2, \dots, b_{n+1}) 
+ x_3 x_2 x_4 \cdots x_{n+1} \otimes f(2, b_2, \dots, b_{n+1}) \\
+ \sum_{i=2}^{n-1} x_{i+1} \cdots x_{2 } x_{i+2} x_{i+3} \cdots x_{n+1} \otimes f(i+1, b_2, \dots, b_{n+1})\\ 
+ x_{n+1} \cdots x_2 \otimes f(n+1, b_2, \dots, b_{n+1})
\end{aligned}
\]

Now, multiply \((x_1 \otimes b_1)\) with each term in \(\Psi f\):

- For the first term in \(\Psi f\):
  \[
  \begin{aligned}
  (x_1 \otimes b_1) \cdot (x_2 \cdots x_{n+1} \otimes f(1, b_2, \dots, b_{n+1})) &= x_1 x_2 \cdots x_{n+1} \otimes (b_1 \prec f(1, b_2, \dots, b_{n+1})) \\
  &+ x_2 \cdots x_{n+1} x_1 \otimes (b_1 \succ f(1, b_2, \dots, b_{n+1}))
  \end{aligned}
  \]
  
- For the second term in \(\Psi f\):
  \begin{align*}
  (x_1 \otimes b_1) \cdot (x_3 x_2 x_4 \cdots x_{n+1} \otimes f(2, b_2, \dots, b_{n+1})) \\= x_1 x_3 x_2 x_4 \cdots x_{n+1} \otimes (b_1 \prec f(2, b_2, \dots, b_{n+1})) \\+ x_3 x_2 x_4 \cdots x_{n+1} x_1 \otimes (b_1 \succ f(2, b_2, \dots, b_{n+1}))
  \end{align*}

- For the \(i\)-th term in the sum:
  \[
  \begin{aligned}
  &(x_1 \otimes b_1) \cdot (x_{i+1} \cdots x_{2 } x_{i+2} x_{i+3} \cdots x_{n+1}  \otimes f(i+1, b_2, \dots, b_{n+1}))=\\ & x_1x_{i+1} \cdots x_{2 } x_{i+2} x_{i+3} \cdots x_{n+1}  \otimes (b_1 \prec f(i+1, b_2, \dots, b_{n+1}))\\& + x_{i+1} \cdots x_{2 } x_{i+2} x_{i+3} \cdots x_{n+1}  x_1 \otimes (b_1 \succ f(i+1, b_2, \dots, b_{n+1}))
   \end{aligned}
  \]
  
- For the last term in \(\Psi f\):
  \[
  \begin{aligned}
  (x_1 \otimes b_1) \cdot (x_{n+1} \cdots x_2 \otimes f(n, b_2, \dots, b_{n+1})) &= x_1 x_{n+1} \cdots x_2 \otimes (b_1 \prec f(n, b_2, \dots, b_{n+1}))\\ &+ x_{n+1} \cdots x_2 x_1 \otimes (b_1 \succ f(n, b_2, \dots, b_{n+1}))
  \end{aligned}
  \]

So, the first term in \(\delta_{HH}(\Psi f)\) is the sum of all these expressions.

\textbf{2. Middle Terms: \(\sum_{i=1}^n (-1)^i (\Psi f)(\dots, (x_i \otimes b_i) \cdot (x_{i+1} \otimes b_{i+1}), \dots)\)}

For each \(i\) from 1 to \(n\), we replace \((x_i \otimes b_i)\) and \((x_{i+1} \otimes b_{i+1})\) with their product:
\[
(x_i \otimes b_i) \cdot (x_{i+1} \otimes b_{i+1}) = x_i x_{i+1} \otimes (b_i \prec b_{i+1}) + x_{i+1} x_i \otimes (b_i \succ b_{i+1})
\]

 Apply \(\Psi f\),
for each \(i\), we get:
\[
\begin{aligned}
\Psi f(\dots, x_i x_{i+1} \otimes (b_i \prec b_{i+1}), \dots) = \\
   x_1 \cdots x_{i-1} (x_i x_{i+1}) x_{i+2} \cdots x_{n+1} \otimes f(1, b_1, \dots, b_{i-1}, b_i \prec b_{i+1}, b_{i+2}, \dots, b_{n+1})\\
   +x_2 x_1 \cdots x_{i-1} (x_i x_{i+1}) x_{i+2} \cdots x_{n+1} \otimes f(2, b_1, \dots, b_{i-1}, b_i \prec b_{i+1}, \dots, b_{n+1})\\
   +\sum_{k=2}^{n-1}x_{k+1} \cdots  x_{1} \cdots (x_i x_{i+1}) \cdots x_{n+1} \otimes f(k+1, \dots, b_i \prec b_{i+1}, \dots)\\
   +x_{n+1} \cdots (x_i x_{i+1}) \cdots x_1 \otimes f(n+1, \dots, b_i \prec b_{i+1}, \dots)
   \end{aligned}
   \]
   
\[
\begin{aligned}
\Psi f(\dots, x_{i+1} x_{i} \otimes (b_i \succ b_{i+1}), \dots) = \\
   (x_{i+1} x_{i})x_{i-1}\dots x_1 x_{i+2} \cdots x_{n+1} \otimes f(1, b_1, \dots, b_{i-1}, b_i \succ b_{i+1}, b_{i+2}, \dots, b_{n+1})\\
   +x_2 x_1 \cdots x_{i-1} (x_{i+1} x_{i}) x_{i+2} \cdots x_{n+1} \otimes f(2, b_1, \dots, b_{i-1}, b_i \succ b_{i+1}, \dots, b_{n+1})\\
   +\sum_{k=2}^{n-2}x_{k+1} \cdots  x_{1} \cdots (x_i x_{i+1}) \cdots x_{n+1} \otimes f(k+1, \dots, b_i \succ b_{i+1}, \dots)\\
   +x_{n+1} \cdots (x_{i+1} x_{i}) \cdots x_1 \otimes f(n, \dots, b_i \succ b_{i+1}, \dots)
   \end{aligned}
   \]

\textbf{3. Last Term: \((-1)^{n+1} (\Psi f)(x_1 \otimes b_1, \dots, x_n \otimes b_n) \cdot (x_{n+1} \otimes b_{n+1})\)}

We have:
\[
\begin{aligned}
\Psi f(x_1 \otimes b_1, \dots, x_n \otimes b_n) &= x_1 \cdots x_n \otimes f(1, b_1, \dots, b_n) + x_2 x_1 x_3 \cdots x_n \otimes f(2, b_1, \dots, b_n)\\ 
&+ \sum_{i=2}^{n-2} x_{i+1} \cdots x_{1} x_{i+2} x_{i+3} \cdots x_n \otimes f(i+1, b_1, \dots, b_n) 
\\ &+ x_n \cdots x_2 x_1 \otimes f(n, b_1, \dots, b_n)
\end{aligned}
\]

Now, multiply each term by \((x_{n+1} \otimes b_{n+1})\):

- For the first term:
  \[
  \begin{aligned}
  (x_1 \cdots x_n \otimes f(1, b_1, \dots, b_n)) \cdot (x_{n+1} \otimes b_{n+1}) \\= x_1 \cdots x_n x_{n+1} \otimes (f(1, b_1, \dots, b_n) \prec b_{n+1}) \\+ x_{n+1} x_1 \cdots x_n \otimes (f(1, b_1, \dots, b_n) \succ b_{n+1})
  \end{aligned}
  \]
  
- For the second term:
  \[
  \begin{aligned}
  (x_2 x_1 x_3 \cdots x_n \otimes f(2, b_1, \dots, b_n)) \cdot (x_{n+1} \otimes b_{n+1}) \\= x_2 x_1 x_3 \cdots x_n x_{n+1} \otimes (f(2, b_1, \dots, b_n) \prec b_{n+1})\\ + x_{n+1} x_2 x_1 x_3 \cdots x_n \otimes (f(2, b_1, \dots, b_n) \succ b_{n+1})
  \end{aligned}
  \]
  
- For the \(i\)-th term in the sum:
  \[
  \begin{aligned}
  &(x_{i+1} \cdots x_{1} x_{i+2} x_{i+3} \cdots x_n \otimes f(i+1, b_1, \dots, b_n)) \cdot (x_{n+1} \otimes b_{n+1}) \\&= x_{i+1} \cdots x_{1} x_{i+2} x_{i+3} \cdots x_n x_{n+1} \otimes (f(i+1, b_1, \dots, b_n) \prec b_{n+1})\\& + x_{n+1}x_{i+1} \cdots x_{1} x_{i+2} x_{i+3} \cdots x_n \otimes (f(i+1, b_1, \dots, b_n) \succ b_{n+1})
   \end{aligned}
  \]
  
- For the last term:

\begin{longtable}{@{}l@{}l@{}}

$\displaystyle (x_n \cdots x_2 x_1 \otimes f(n, b_1, \dots, b_n)) \cdot (x_{n+1} \otimes b_{n+1})$
& $ =\displaystyle x_n \cdots x_2 x_1 x_{n+1} \otimes (f(n, b_1, \dots, b_n) \prec b_{n+1})$ \\[4pt]
& $\displaystyle + x_{n+1} x_n \cdots x_2 x_1 \otimes (f(n, b_1, \dots, b_n) \succ b_{n+1})$
\end{longtable}

The last term in \(\delta_{HH}(\Psi f)\) is \((-1)^{n+1}\) times the sum of all these expressions.

Alignment of Terms:
\begin{itemize}
    \item First terms corresponds to $b_1\succ f(i,\dots,)$ or $b_1\prec f(i,\dots,)$ in $\delta_{dend}$,
\item Middle Terms  corresponds to $b_i\prec b_{i+1}+b_i\succ b_{i+1}$  in $\delta_{dend}$,
\item last Terms corresponds to $f(i,\dots,)\succ b_{n+1}$ or $f(i,\dots,)\prec b_{n+1}$ in $\delta_{dend}$
    \end{itemize}
By the Perm identity
\[
x_ix_{\sigma{(1)}}\dots \hat{x}_{\sigma{(i)}}\dots x_{\sigma{(n+1)}}=x_ix_1\dots \hat{x}_{i}\dots x_{n+1};\quad \ for  \quad i=1,\dots n+1,\quad \sigma \in S_{n+1}.
\]
we get:
\begin{longtable}{@{}l@{}l@{}}
$\displaystyle \delta_{HH}(\Psi f)$ 
& $(x_1 \otimes b_1, \dots,\displaystyle x_{n+1} \otimes b_{n+1}) = x_1x_2\dots x_{n+1}\otimes \sum_{i=1}^n a_1\prec f(i,b_2,\dots,b_{n+1})$ \\[4pt]
& $\displaystyle - x_1x_2\dots x_{n+1}\otimes f(1,b_1\succ b_2,\dots,b_{n+1})$ \\[4pt]
% Third row
& $\displaystyle + x_1x_2\dots x_{n+1}\otimes\sum_{i=2}^n(-1)^i f(1,b_1,\dots, b_i*b_{i+1},\dots,b_{n+1})$ \\[4pt]
% Fourth row
& $\displaystyle + (-1)^{n+1} x_1x_2\dots x_n\otimes f(1,b_1,b_2,\dots,b_{n})\succ b_{n+1}$ \\[4pt]
% Fifth row
& $\displaystyle + x_2x_1\dots x_{n+1}\otimes b_1\succ f(1,b_2,\dots,b_{n+1})$ \\[4pt]
& $\displaystyle - x_2x_1\dots x_{n+1}\otimes f(1,b_1\succ b_2,b_3,\dots,b_{n+1})$ \\[4pt]
& $\displaystyle + x_2x_1\dots x_{n+1}\otimes f(2,b_1,b_{2}\prec b_{3},\dots,b_{n+1})$ \\[4pt]
& $\displaystyle + x_2x_1\dots x_{n+1}\otimes \sum_{i=3}^{n}(-1)^i f(2,b_1,\dots, b_i*b_{i+1},\dots,b_{n+1})$ \\[4pt]
& $\displaystyle + (-1)^{n+1}x_2x_1\dots x_{n+1}\otimes f(2,b_1,\dots,b_n)\prec b_{n+1}$ \\[4pt]
& $\displaystyle + \sum_{i=2}^{n-1}x_{i+1}x_i\dots x_1x_{i+2}\dots x_{n+1}\otimes \sum_{i=1}^{r-1}b_1\succ f(r-1,b_2,\dots,b_{n+1})$ \\[4pt]
& $\displaystyle + \sum_{i=2}^{n-1}x_{i+1}x_i\dots x_1x_{i+2}\dots x_{n+1}\otimes \sum_{i=1}^{r-2}(-1)^i f(r-1,b_1,\dots, b_i*b_{i+1},\dots,b_{n+1})$ \\[4pt]
& $\displaystyle + (-1)^{r-1}\sum_{i=2}^{n-1}x_{i+1}x_i\dots x_1x_{i+2}\dots x_{n+1}\otimes f(r-1,b_1,\dots,b_{r-1}\succ b_r,\dots,b_{n+1})$ \\[4pt]
& $\displaystyle + (-1)^{r}\sum_{i=2}^{n-1}x_{i+1}x_i\dots x_1x_{i+2}\dots x_{n+1}\otimes f(r,b_1,\dots,b_{r}\prec b_{r+1},\dots,b_{n+1})$ \\[4pt]
& $\displaystyle + \sum_{i=2}^{n-1}x_{i+1}x_i\dots x_1x_{i+2}\dots x_{n+1}\otimes \sum_{i=r+1}^{n}(-1)^i f(r,b_1,\dots, b_i*b_{i+1},\dots,b_{n+1})$ \\[4pt]
& $\displaystyle + \sum_{i=2}^{n-1}x_{i+1}x_i\dots x_1x_{i+2}\dots x_{n+1}\otimes \sum_{i=r+1}^{n}(-1)^{n+1} f(r,b_1,\dots,b_{n})\prec b_{n+1}$ \\[4pt]
& $\displaystyle + x_{n+1}x_{n}\dots x_2x_1\otimes b_1\succ f(n,b_2,\dots,b_{n+1})$ \\[4pt]
& $\displaystyle + \sum_{i=1}^{n-1}(-1)^i x_{n+1}x_{n}\dots x_2x_1\otimes f(n,b_1,\dots, b_i*b_{i+1},\dots,b_{n+1})$ \\[4pt]
& $\displaystyle + (-1)^{n}x_{n+1}x_{n}\dots x_2x_1\otimes f(n,b_1,b_2,\dots,b_n\succ b_{n+1})$ \\[4pt]
& $\displaystyle + \sum_{i=1}^n(-1)^{n+1}x_{n+1}x_{n}\dots x_2x_1\otimes f(i,b_1,b_2,\dots,b_n)\succ b_{n+1}$ \\[4pt]
& $\displaystyle = x_1x_2\dots x_{n} \otimes (\delta_{dend}f)(1,b_1 , b_2,\dots,b_{n+1})$ \\[4pt]
& $\displaystyle + x_2x_1x_3\dots x_{n+1} \otimes (\delta_{dend}f)(2,b_1, b_2,\dots,b_{n+1})$ \\[4pt]
& $\displaystyle + \sum_{i=2}^{n-1}x_{i+1}x_i\dots x_1x_{i+2}\dots x_{n+1}\otimes (\delta_{dend}f)(i+1,b_1,b_2,\dots,b_{n+1})$ \\[4pt]
& $\displaystyle + x_{n+1}x_{n}\dots x_2x_1\otimes (\delta_{dend}f)(n+1,b_1,b_2,\dots,b_{n+1})$ \\[4pt]
& $\displaystyle = \Psi(\delta_{dend}(f))\big(x_1 \otimes b_1, x_2 \otimes b_2,\dots,x_{n+1}\otimes b_{n+1}\big)$ \\
\end{longtable}

   \end{proof}
  \begin{remark}\label{re:3.5}
In the case where the Perm algebra \(A\) is free, the cochain map \(\Psi\) is injective. 
(By ``free Perm algebra'' we mean \(A \cong \mathrm{Perm}\langle X \rangle\) for some set \(X\), 
where \(\mathrm{Perm}\langle X \rangle\) is constructed as follows.

Let \(F\langle X \rangle\) denote the free non-unital associative algebra over \(X = \{x_1, x_2, \dots\}\). 
Consider the two-sided ideal \(I \subset F\langle X \rangle\) generated by all elements of the form
\[
x(yz) - x(zy), \quad \text{for all } x, y, z \in F\langle X \rangle.
\]
Then define
\[
\mathrm{Perm}\langle X \rangle := F\langle X \rangle / I.
\]
By construction, \(\mathrm{Perm}\langle X \rangle\) satisfies the Perm identity 
\((a \cdot b) \cdot c = a \cdot (b \cdot c) = a \cdot (c \cdot b)\) for all \(a, b, c \in \mathrm{Perm}\langle X \rangle\), 
and it is free in the sense that any map \(X \to P\) to a Perm algebra \(P\) extends uniquely 
to a Perm algebra homomorphism \(\mathrm{Perm}\langle X \rangle \to P\)).
\end{remark}
\begin{corollary} 
    Given the canonical embedding of cochain complexes
\[
C^*_{\mathrm{dend}}(B,N) \hookrightarrow C^*_{\mathrm{Hoch}}(A\otimes B,A\otimes N),
\]
we obtain a short exact sequence of complexes:
\[
0 \to C^*_{\mathrm{dend}}(B,N) \hookrightarrow C^*_{\mathrm{Hoch}}(A\otimes B,A\otimes N)\to Q^* \to 0,
\]
where $A$ is a perm-free algebra, $B$ is a pre-associative algebra, $N$ is a $B$-bimodule, and $ Q^* = C^*_{\mathrm{Hoch}}(A\otimes B,A\otimes N)/C^*_{\mathrm{dend}}(B,N)$ is the quotient complex.
Applying the cohomology functor yields the long exact sequence:
\[
\begin{aligned}
\cdots \to H^{n-1}_{\mathrm{dend}}(B,N) &\to HH^{n-1}(A\otimes B,A\otimes N) \to H^{n-1}(Q^*) \\
&\to H^n_{\mathrm{dend}}(B,N) \to HH^n(A\otimes B,A\otimes N) \to H^n(Q^*) \\
&\to H^{n+1}_{dend}(B,N) \to HH^{n+1}(A\otimes B,A\otimes N) \to \cdots
\end{aligned}
\]
\end{corollary}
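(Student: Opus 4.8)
The plan is to derive this as a purely formal consequence of Theorem \ref{5.4} together with the injectivity statement recorded in the preceding remark. First I would observe that the image of $\Psi$ is a subcomplex of $\bigl(C^*_{\mathrm{Hoch}}(A\otimes B, A\otimes N), \delta_{HH}\bigr)$: since Theorem \ref{5.4} establishes $\delta_{HH}\circ\Psi = \Psi\circ\delta_{dend}$, any $\delta_{HH}$-image of an element of $\operatorname{im}\Psi$ again lies in $\operatorname{im}\Psi$, so $\operatorname{im}\Psi$ is closed under the Hochschild differential.

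Next, assuming $A$ is a free Perm algebra so that each $\Psi^n$ is injective, I would form the quotient $Q^n := C^n_{\mathrm{Hoch}}(A\otimes B, A\otimes N)/\Psi^n\bigl(C^n_{\mathrm{dend}}(B,N)\bigr)$ and equip $Q^*$ with the differential induced by $\delta_{HH}$. This induced differential is well defined precisely because $\operatorname{im}\Psi$ is a subcomplex. Degreewise one then has the short exact sequence of vector spaces
\[
0 \to C^n_{\mathrm{dend}}(B,N) \xrightarrow{\ \Psi^n\ } C^n_{\mathrm{Hoch}}(A\otimes B, A\otimes N) \xrightarrow{\ p^n\ } Q^n \to 0,
\]
with $p$ the canonical projection; since both $\Psi$ and $p$ commute with the differentials, these assemble into a short exact sequence of cochain complexes.

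Finally I would invoke the standard zig-zag (snake) lemma: any short exact sequence of cochain complexes induces a long exact sequence in cohomology with connecting homomorphisms $\partial\colon H^n(Q^*)\to H^{n+1}_{\mathrm{dend}}(B,N)$. Applied here this produces exactly the displayed sequence, in which the map $H^n_{\mathrm{dend}}\to HH^n$ is induced by $\Psi$ and the map $HH^n\to H^n(Q^*)$ is induced by $p$.

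The homological algebra is entirely formal once the short exact sequence of complexes is in hand, so the only point genuinely requiring care is the degreewise injectivity of $\Psi$, which is where the freeness of $A$ enters. I would make this explicit by noting that the $A$-monomials $x_1x_2\cdots x_n$, $x_2x_1x_3\cdots x_n,\ldots, x_nx_{n-1}\cdots x_1$ attached to the components $f(1,\dots),\ldots,f(n,\dots)$ in the definition of $\Psi(f)$ have pairwise distinct leading factors $x_1,x_2,\ldots,x_n$. In a free Perm algebra, where a product depends only on its first factor and the multiset of the remaining factors, such monomials are linearly independent, so $\Psi(f)=0$ forces every $f(i,b_1,\ldots,b_n)$ to vanish and hence $f=0$.
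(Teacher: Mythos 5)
Your proposal is correct and is exactly the standard argument the paper intends: the paper states this corollary without proof, treating it as a formal consequence of Theorem \ref{5.4} and the injectivity remark, and your subcomplex--quotient--snake-lemma chain is the right way to fill that in. Your closing justification of degreewise injectivity (distinct leading factors of the monomials $x_1x_2\cdots x_n,\ x_2x_1x_3\cdots x_n,\ldots,x_n\cdots x_1$ in a free Perm algebra, where a product is determined by its first factor and the multiset of the rest) is sound and actually supplies a detail the paper only asserts in its preceding remark.
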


\begin{example}\label{exm:5.7}
Let:
\begin{itemize}
\item $A = F\langle x_1,x_2,x_3\rangle$ be the free Perm algebra.
\item $B$ is the pre-associative algebra defined in \ref{exp:dend}.
\end{itemize}
\end{example}
A Hochschild 2-cochain \( \Psi: (A \otimes B) \otimes (A \otimes B) \to (A \otimes B) \) can be expressed in terms of \( f \) as:
\[
\Psi(x_1 \otimes e_i, x_2 \otimes e_j) =  x_1x_2 \otimes f(1, e_i, e_j) + x_2x_1 \otimes f(2, e_i, e_j)=\sum_{k=1}^n x_1x_2 \otimes \alpha^k_{ij} e_k + x_2x_1 \otimes \beta^k_{ij} e_k.
\]

To compute the Hochschild differential \( \delta_{HH} \Psi \) evaluated at \( (x_1 \otimes e_1, x_2 \otimes e_1, x_3 \otimes e_1) \), we'll follow the standard definition of the Hochschild differential for a cochain \( \Psi \):
\[
(\delta_{HH} \Psi)(a, b, c) = a \cdot \Psi(b, c) - \Psi(a \cdot b, c) + \Psi(a, b \cdot c) - \Psi(a, b) \cdot c
\]
where \( a = x_1 \otimes e_1 \), \( b = x_2 \otimes e_1 \), and \( c = x_3 \otimes e_1 \).
\begin{itemize}
    \item 
  Compute \( \Psi(b, c) = \Psi(x_2 \otimes e_1, x_3 \otimes e_1) \):
\[
\Psi(x_2 \otimes e_1, x_3 \otimes e_1) = \sum_{k=1}^n x_2x_3 \otimes \alpha^k_{11} e_k + \sum_{k=1}^n x_3x_2 \otimes \beta^k_{11} e_k
\]

\item Compute \( a \cdot \Psi(b, c) \)

Using the product in \( A \otimes B \):
\[
(x_1 \otimes e_1) \cdot (x_2x_3 \otimes \alpha^k_{11} e_k) = x_1(x_2x_3) \otimes (e_1 \prec \alpha^k_{11} e_k) + (x_2x_3)x_1 \otimes (e_1 \succ \alpha^k_{11} e_k).
\]
Since \(e_1 \prec e_k = 0\) for all \(k \geq 1\) and \(e_1 \succ e_k = e_1\) only if \(k = 1\), this simplifies to:
\[
(x_2x_3)x_1 \otimes \alpha^1_{11} e_1.
\]
Similarly:
\[
(x_1 \otimes e_1) \cdot (x_3x_2 \otimes \beta^k_{11} e_k) = (x_3x_2)x_1 \otimes \beta^1_{11} e_1.
\]
Thus, the first term is:
\[
(x_2x_3)x_1 \otimes \alpha^1_{11} e_1 + (x_3x_2)x_1 \otimes \beta^1_{11} e_1.
\]

 \item  Compute \(-\Psi((x_1 \otimes e_1) \cdot (x_2 \otimes e_1), x_3 \otimes e_1)\):
\[
(x_1 \otimes e_1) \cdot (x_2 \otimes e_1)=x_1x_2 \otimes (e_1 \prec e_1) + x_2x_1 \otimes (e_1 \succ e_1) = x_2x_1 \otimes e_1.
\]
Now apply \(\Psi\):
\[
\Psi(x_2x_1 \otimes e_1, x_3 \otimes e_1) = \sum_{k=1}^n (x_2x_1)x_3 \otimes \alpha^k_{11} e_k + \sum_{k=1}^n x_3(x_2x_1) \otimes \beta^k_{11} e_k.
\]

 \item Compute \(\Psi(x_1 \otimes e_1, (x_2 \otimes e_1) \cdot (x_3 \otimes e_1))\):
\[
(x_2 \otimes e_1) \cdot (x_3 \otimes e_1)=x_2x_3 \otimes (e_1 \prec e_1) + x_3x_2 \otimes (e_1 \succ e_1) = x_3x_2 \otimes e_1.
\]
Now apply \(\Psi\):
\[
\Psi(x_1 \otimes e_1, x_3x_2 \otimes e_1) = \sum_{k=1}^n x_1(x_3x_2) \otimes \alpha^k_{11} e_k + \sum_{k=1}^n (x_3x_2)x_1 \otimes \beta^k_{11} e_k.
\]

 \item  Compute \(-\Psi(x_1 \otimes e_1, x_2 \otimes e_1) \cdot (x_3 \otimes e_1)\)
\[
\Psi(x_1 \otimes e_1, x_2 \otimes e_1)=\sum_{k=1}^n x_1x_2 \otimes \alpha^k_{11} e_k + \sum_{k=1}^n x_2x_1 \otimes \beta^k_{11} e_k.
\]
Multiply by \((x_3 \otimes e_1)\):
\[
\begin{aligned}
&(x_1x_2)x_3 \otimes (\alpha^k_{11} e_k \prec e_1) + x_3(x_1x_2) \otimes (\alpha^k_{11} e_k \succ e_1)\\
&+ (x_2x_1)x_3 \otimes (\beta^k_{11} e_k \prec e_1) + x_3(x_2x_1) \otimes (\beta^k_{11} e_k \succ e_1)\\
\end{aligned}
\]
Thus, the fourth term is:
\begin{longtable}{@{}c@{}c@{}}
& $\displaystyle \sum_{k=2}^n (x_1x_2)x_3 \otimes \alpha^k_{11} e_k + x_3(x_1x_2) \otimes \alpha^1_{11} e_1\displaystyle + \sum_{k=2}^n (x_2x_1)x_3 \otimes \beta^k_{11} e_k + x_3(x_2x_1) \otimes \beta^1_{11} e_1.$
\end{longtable}

 \item  Combine All Terms

Now, substitute all computed terms back into the Hochschild differential:

\[
\begin{aligned}
\delta_{HH} \Psi(a, b, c) = &(x_2x_3)x_1 \otimes \alpha^1_{11} e_1 + (x_3x_2)x_1 \otimes \beta^1_{11} e_1\\
&-\sum_{k=1}^n (x_2x_1)x_3 \otimes \alpha^k_{11} e_k - \sum_{k=1}^n x_3(x_2x_1) \otimes \beta^k_{11} e_k\\
&+ \sum_{k=1}^n x_1(x_3x_2) \otimes \alpha^k_{11} e_k + \sum_{k=1}^n (x_3x_2)x_1 \otimes \beta^k_{11} e_k\\
&-\sum_{k=2}^n(x_1x_2)x_3 \otimes \alpha^k_{11} e_k  - x_3(x_1x_2) \otimes \alpha^1_{11} e_1 \\
&-\sum_{k=2}^n (x_2x_1)x_3 \otimes \beta^k_{11} e_k - x_3(x_2x_1) \otimes \beta^1_{11} e_1
\end{aligned}
\]

\item Using the Perm identities, the Hochschild differential \( \delta_{HH} \Psi \) evaluated on the triple \( (x_1 \otimes e_1, x_2 \otimes e_1, x_3 \otimes e_1) \) is:

\[
\begin{aligned}
x_1x_2x_3\otimes\alpha^1_{11}e_1-\sum_{k=2}^n x_2x_1x_3 \otimes\big( \alpha^k_{11} +\beta^k_{11}\big) e_k- x_3x_1x_2 \otimes \alpha^1_{11} e_1\\
\end{aligned}
\]
Similarly:

\item The Hochschild differential \( \delta_{HH} \Psi \) evaluated at \( (x_1 \otimes e_i, x_2 \otimes e_1, x_3 \otimes e_1) \) is:
\begin{longtable}{@{}l@{}l@{}}
& $\displaystyle x_1 x_2 x_3 \otimes \big((\alpha_{11}^1 + \beta_{11}^1)e_1 - \sum_{k=2}^{n} \alpha_{i1}^k e_k \big)\displaystyle - x_2 x_1x_3 \otimes \sum_{k=2}^n \beta_{i1}^k e_k - x_3 x_1x_2 \otimes (\alpha^1_{11} + \beta^1_{11})e_1$
\end{longtable}

\item The Hochschild differential \( \delta_{HH} \Psi \) evaluated at \( (x_1 \otimes e_1, x_2 \otimes e_i, x_3 \otimes e_1) \) is:
$$(x_1 x_2) x_3 \otimes \alpha_{1i}^1 e_1 + (x_2 x_1) x_3 \otimes (\alpha_{i1}^1 + \beta_{1i}^1) e_1  + (x_3 x_1) x_2 \otimes (- \alpha_{1i}^1+\beta_{i1}^1 - \beta_{1i}^1) e_1
$$

\item The Hochschild differential \( \delta_{HH} \Psi \) evaluated at \( (x_1 \otimes e_1, x_2 \otimes e_1, x_3 \otimes e_i) \) is:

\[
\begin{aligned}
& - x_2 x_1 x_3 \otimes \sum_{k=2}^n\alpha^k_{1i}e_k  + x_3 x_1 x_2 \otimes \beta_{1i}^1 e_1
\end{aligned}
\]

\item The Hochschild differential \( \delta_{HH} \Psi \) evaluated at \( (x_1 \otimes e_i, x_2 \otimes e_j, x_3 \otimes e_1) \) is:
\[
\begin{aligned}
& x_1 x_2 x_3 \otimes ( \alpha_{j1}^1+\beta_{j1}^1 +\alpha^1_{ij})e_1  + x_2 x_1 x_3 \otimes \beta_{ij}^1 e_1  - x_3 x_1 x_2 \otimes(\alpha^1_{ij}+\beta^1_{ij})e_1
\end{aligned}
\]

\item The Hochschild differential \( \delta_{HH} \Psi \) evaluated at \( (x_1 \otimes e_i, x_2 \otimes e_1, x_3 \otimes e_j) \) is:

\[
\begin{aligned}
& (x_1 x_2) x_3 \otimes ((\alpha_{ij}^1 +\beta_{ij}^1)e_1  - \sum_{k=1}^n\alpha^k_{ij}e_k)  - (x_3 x_1) x_2 \otimes \sum_{k=1}^n\beta^k_{ij}e_k
\end{aligned}
\]

\item The Hochschild differential \( \delta_{HH} \Psi \) evaluated at \( (x_1 \otimes e_1, x_2 \otimes e_i, x_3 \otimes e_j) \) is:

\[
\begin{aligned}
  x_2 x_1 x_3 \otimes \alpha_{ij}^1e_1+ x_3 x_1 x_2 \otimes \beta_{ij}^1e_1
\end{aligned}
\]

\item The Hochschild differential \( \delta_{HH} \Psi \) evaluated at \( (x_1 \otimes e_i, x_2 \otimes e_j, x_3 \otimes e_k) \) is:

\[
\begin{aligned}
& (x_1 x_2) x_3 \otimes (\alpha^1_{jk}+\beta^1_{jk})e_1  + (x_2 x_1) x_3 \otimes \alpha_{ij}^1e_1  + (x_3 x_1) x_2 \otimes \beta_{ij}^1e_1
\end{aligned}
\]
\end{itemize}
For $\delta_{HH}\Psi=0$, we have: 
 
\[
\begin{aligned}
\alpha^1_{11}&=0,\\
\alpha^k_{1j}&=0,\  \forall j\geq 2,\ k\geq 1\\
\alpha^k_{i1}&=0,\  \forall i\geq 2, k\geq 1\\
\alpha^k_{ij}&=0,\  \forall i,j,k\geq 2\\
\beta^1_{11}&=0,\\
\beta^k_{1j}&=0,\  \forall j\geq 2,\ k\geq 1\\
\beta^k_{i1}&=0,\  \forall i\geq 2, k\geq 1\\
\beta^k_{ij}&=0,\  \forall i,j,k\geq 2\\
\alpha^k_{11}&+\beta^k_{11}=0,\  \forall  k\geq 2.
\end{aligned}
\]

The  solution satisfying all cochains conditions is:
\[
{(\Psi f)(x_1\otimes e_1, x_1\otimes e_1) = x_1x_2 \otimes \sum_{k=2}^n\alpha_{11}^k e_k - x_2x_1 \otimes \sum_{k=2}^n\alpha_{11}^k e_k \quad }
\]
To compute the Hochschild coboundary \(\delta_{HH} (\Psi g)\) evaluated at \((x_1 \otimes e_1, x_2 \otimes e_1)\), we use the Hochschild differential formula for a 1-cochain \(\Psi g\):

\[
(\delta_{HH} (\Psi g))(a, b) = a \cdot (\Psi g)(b) - (\Psi g)(a \cdot b) + (\Psi g)(a) \cdot b.
\]

1. First term: \( (x_1 \otimes e_1) \cdot (\Psi g)(x_2 \otimes e_1) \):
     \[
     (\Psi g)(x_2 \otimes e_1) = x_2 \otimes \sum_{k=1}^n c_1^k e_k.
     \]
   - Now multiply by \((x_1 \otimes e_1)\) using the product in \(A \otimes B\):
     \[
     (x_1 \otimes e_1) \cdot \left( x_2 \otimes \sum_{k=1}^n c_1^k e_k \right) = x_1x_2 \otimes \left( e_1 \prec \sum_{k=1}^n c_1^k e_k \right) + x_2x_1 \otimes \left( e_1 \succ \sum_{k=1}^n c_1^k e_k \right).
     \]
   - Simplify using the structure of \(B\):
     \[
     e_1 \prec e_k = 0 \quad \text{for all } k, \quad e_1 \succ e_k = e_1 \text{ only if } k = 1.
     \]
     Thus:
     \[
     = x_2x_1 \otimes c_1^1 e_1.
     \]

2. Second term: \( -(\Psi g)((x_1 \otimes e_1) \cdot (x_2 \otimes e_1)) \):
     \[
     x_1x_2 \otimes (e_1 \prec e_1) + x_2x_1 \otimes (e_1 \succ e_1) = x_2x_1 \otimes e_1.
     \]
   - Apply \(\Psi g\):
     \[
     (\Psi g)(x_2x_1 \otimes e_1) = x_2x_1 \otimes \sum_{k=1}^n c_1^k e_k.
     \]

3. Third term: \( (\Psi g)(x_1 \otimes e_1) \cdot (x_2 \otimes e_1) \):
     \[
     (\Psi g)(x_1 \otimes e_1) = x_1 \otimes \sum_{k=1}^n c_1^k e_k.
     \]
   - Multiply by \((x_2 \otimes e_1)\):
     \[
     \begin{aligned}
     \left( x_1 \otimes \sum_{k=1}^n c_1^k e_k \right) \cdot (x_2 \otimes e_1)& = x_1x_2 \otimes \left( \sum_{k=1}^n c_1^k e_k \prec e_1 \right) + x_2x_1 \otimes \left( \sum_{k=1}^n c_1^k e_k \succ e_1 \right)\\
     &=x_1x_2 \otimes  \sum_{k=2}^n c_1^k e_k  + x_2x_1 \otimes  c_1^1e_1
      \end{aligned}.
     \]

{Combining all terms}
\[
\begin{aligned}
(\delta_{HH} (\Psi g))(x_1 \otimes e_1, x_2 \otimes e_1) = x_1x_2 \otimes \sum_{k=2}^n c_1^k e_k - x_2x_1\otimes \sum_{k=1}^n c_1^k e_k.
\end{aligned}
\]

It is the same result as in Example \ref{2.7}.
\section{pre-Lie algebras}
\begin{definition}[\!\!\cite{Gerstenhaber}]
A pre-Lie algebra $(P, \cdot_P)$ is a vector space  with a binary operation $\cdot_P : P \otimes P \rightarrow P$ such that
\[
(a  \cdot_P b) \cdot_P  c - a \cdot_P (b \cdot_P c) = (a \cdot_P c) \cdot_P b - a \cdot_P (c \cdot_P b), \quad for \  a, b, c \in P.
\]
The \textbf{total product} $[a,b] := a \cdot_P b - b \cdot_P a$ is the \textbf{Lie bracket} induced by the pre-Lie product.
\end{definition}
\begin{definition}[\!\!\cite{Gerstenhaber}]
A bimodule over a pre-Lie algebra  $(P, \cdot)$ is a vector space  M  with left and right actions, satisfying:
\[
\begin{aligned}
& (a \cdot b) \cdot m - a \cdot (b \cdot m) = (a \cdot m) \cdot b - a \cdot (m \cdot b), \\
& (m \cdot a) \cdot b - m \cdot (a \cdot b) = (m \cdot b) \cdot a - m \cdot (b \cdot a).
\end{aligned}
\]
for $a,b\in P$, $m\in M.$
\end{definition}
\begin{example}
    Define a \textbf{pre-Lie algebra} \((P, \cdot_P)\) generated by \( e_1, e_2 \), and product:
\[
e_1 \cdot_P e_2 = e_1, \quad \text{all other products zero}.
\]
\textbf{Lie Bracket} $[e_1,e_2]=e_1$ and all other products zero.
\end{example}
\begin{definition}[\!\!\cite{Dzhumadil’daev}]
We consider a pre-Lie algebra \((P, \cdot_P)\) and a \textbf{representation} \(N\) of \(P\). The \textbf{cochain complex} for right-symmetric algebra cohomology is given by:
\[
C_{\text{pre}}^{*}(P, N) = \bigoplus_{k \succ  0} C_{\text{pre}}^{k}(P, N),
\]
where:
\begin{itemize}
    \item \(C_{\text{pre}}^{k}(P, N)\) consists of multilinear maps \(\psi: P^{\otimes k} \to N\),
    \item The \textbf{coboundary operator} \(\delta_{\text{pre}}: C_{\text{pre}}^{k} \to C_{\text{pre}}^{k+1}\) is defined as:
\end{itemize}

\[
\begin{aligned}
(\delta_{\text{pre}} \psi)(a_0, a_1, \dots, a_k) &= -\sum_{i=1}^{k} (-1)^i a_0 .\psi(a_i, a_1, \dots, \hat{a}_i, \dots, a_k) \\
&+ \sum_{i=1}^{k} (-1)^i \psi(a_0 \cdot_P a_i, a_1, \dots, \hat{a}_i, \dots, a_k) \\
&+ \sum_{1 \leq i \prec  j \leq k} (-1)^{i+1} \psi(a_0, a_1, \dots, \hat{a}_i, \dots, [a_i, a_j], \dots, a_k) \\
&- \sum_{i=1}^{k} (-1)^i \psi(a_0, a_1, \dots, \hat{a}_i, \dots, a_k) . a_i,
\end{aligned}
\]

where:
\begin{itemize}
    \item \(\hat{a}_i\) means \(a_i\) is omitted,
    \item \([a_i, a_j] = a_i \cdot_P a_j - a_j \cdot_P a_i\) is the \textbf{Lie bracket} induced by the pre-Lie product.
\end{itemize}

\begin{enumerate}
    \item \textbf{Degree 1 (\(k=1\)):}
    \[
        (\delta_{\text{pre}} \psi)(a_0, a_1) = -a_0 \cdot \psi(a_1) + \psi(a_0 \cdot_p a_1) - \psi(a_0) \cdot a_1.
        \]

    \item \textbf{Degree 2 (\(k=2\)):}
   
        \[
        \begin{aligned}
        (\delta_{\text{pre}} \psi)(a_0, a_1, a_2) &= -a_0 \cdot \psi(a_1, a_2) +a_0 \cdot \psi(a_2, a_1)+ \psi(a_0 \cdot_P a_1, a_2) - \psi(a_0 \cdot_P a_2, a_1) \\
        &+ \psi(a_0, [a_1, a_2]) - \psi(a_0, a_1) \cdot a_2 + \psi(a_0, a_2) \cdot a_1.
        \end{aligned}
        \]
        
 \item \textbf{Degree 3 (\(k=3\)):}
\[
\begin{aligned}
(\delta_{\text{pre}} \psi)(a_0, a_1, a_2, a_3) &= -a_0 \cdot \psi(a_1, a_2, a_3) + a_0 \cdot \psi(a_2, a_1, a_3) \\
&- a_0 \cdot \psi(a_3, a_1, a_2) + \psi(a_0 \cdot_P a_1, a_2, a_3) \\
&- \psi(a_0 \cdot_P a_2, a_1, a_3) + \psi(a_0 \cdot_P a_3, a_1, a_2) \\
&+ \psi(a_0, [a_1, a_2], a_3) - \psi(a_0, [a_1, a_3], a_2) \\
&+ \psi(a_0, [a_2, a_3], a_1) - \psi(a_0, a_1, a_2) \cdot a_3 \\
&+ \psi(a_0, a_1, a_3) \cdot a_2 - \psi(a_0, a_2, a_3) \cdot a_1.
\end{aligned}
\]
\end{enumerate}
\end{definition}
\begin{example}\label{exm:5.4}
We find $H^n_{pre}(P,P)$, where $P$ is a pre-Lie generated by  \( e_1, e_2 \) and product:
    \[
    e_1 \cdot_P e_2 = e_1, \quad \text{all other products } e_i \cdot_P e_j = 0.
    \]
 \textbf{Lie bracket}:
    \[
    [e_1, e_2] = e_1 \cdot_P e_2 - e_2 \cdot_P e_1 = e_1 - 0 = e_1.
    \]
\begin{itemize}
    \item \textbf{2-cochains}: Bilinear maps \( \phi: P \times P \to P \).
    \item \textbf{Coboundary}:
    \[
    \begin{aligned}
        (\delta_{\text{pre}} \phi)(x, y, z) =& -x \cdot_P \phi(y, z) +x \cdot_P \phi(z, y)+ \phi(x \cdot_P y, z) - \phi(x \cdot_P z, y) \\ &+ \phi(x, [y, z]) - \phi(x, y) \cdot_P z + \phi(x, z) \cdot_P y.
    \end{aligned}
    \]
    \item \textbf{2-cocycles}: Maps \( \phi \) satisfying \( \delta_{\text{pre}} \phi = 0 \). Assume:
    \[
    \phi(e_1, e_2) = a e_1 + b e_2, \quad \phi(e_2, e_1) = c e_1 + d e_2, \quad \phi(e_i, e_i) = 0.
    \]

 For \( (x, y, z) = (e_1, e_2, e_1) \):
\[
\begin{aligned}
(\delta_{\text{pre}} \phi)(e_1, e_2, e_1) &= -e_1 \cdot_P \phi(e_2, e_1) + e_1 \cdot_P \phi(e_1, e_2) + \phi(e_1 \cdot_P e_2, e_1) \\&- \phi(e_1 \cdot_P e_1, e_2) + \phi(e_1, [e_2, e_1]) - \phi(e_1, e_2) \cdot_P e_1 + \phi(e_1, e_1) \cdot_P e_2.\\
&= -e_1 \cdot_P (c e_1 + d e_2) + e_1 \cdot_P (a e_1 + b e_2) + \phi(e_1, e_1)\\ & - \phi(0, e_2) + \phi(e_1, -e_1) - (a e_1 + b e_2) \cdot_P e_1.
\\
&= -c (e_1 \cdot_P e_1) - d (e_1 \cdot_P e_2) + a (e_1 \cdot_P e_1) + b (e_1 \cdot_P e_2)  - a (e_1 \cdot_P e_1)\\ & - b (e_2 \cdot_P e_1)= -d e_1 + b e_1 = (-d + b) e_1.
\end{aligned}
\]
For \( \delta_{\text{pre}} \phi = 0 \), we must have:
\[
-d + b = 0 \implies b = d.
\]

Similarly, evaluating \( \delta_{\text{pre}} \phi \) for \( (e_2, e_1, e_2) \):
\[
(\delta_{\text{pre}} \phi)(e_2, e_1, e_2) = de_2
\]
For \( \delta_{\text{pre}} \phi = 0 \), we must have:
\[
d = 0 \implies b = d = 0.
\]

From the above, a 2-cocycle \( \phi \) must satisfy:
\[
b = d = 0, \quad \phi(e_1, e_2) = a e_1, \quad \phi(e_2, e_1) = c e_1.
\]
Additionally, the cocycle condition imposes \( \phi(e_i, e_i) = 0 \).

A 2-cochain \( \phi \) is a coboundary if \( \phi = \delta_{\text{pre}} \psi \) for some 1-cochain \( \psi \). The coboundary operator for a 1-cochain \( \psi \) is:
\[
(\delta_{\text{pre}} \psi)(x, y) = -x \cdot_P \psi(y) + \psi(x \cdot_P y) - \psi(x) \cdot_P y.
\]
Let \( \psi(e_1) = \alpha e_1 + \beta e_2 \) and \( \psi(e_2) = \gamma e_1 + \delta e_2 \). Then:
\[
\phi(e_1, e_2) = (\delta_{\text{pre}} \psi)(e_1, e_2) = -e_1 \cdot_P \psi(e_2) + \psi(e_1 \cdot_P e_2) - \psi(e_1) \cdot_P e_2.
\]
Substituting:
\[
= -e_1 \cdot_P (\gamma e_1 + \delta e_2) + \psi(e_1) - (\alpha e_1 + \beta e_2) \cdot_P e_2 = -\delta e_1 + (\alpha e_1 + \beta e_2) - \alpha e_1 = -\delta e_1 + \beta e_2.
\]
Similarly:
\[
\phi(e_2, e_1) = (\delta_{\text{pre}} \psi)(e_2, e_1) = -e_2 \cdot_P \psi(e_1) + \psi(e_2 \cdot_P e_1) - \psi(e_2) \cdot_P e_1 = 0.
\]
Thus, a coboundary \( \phi \) must satisfy:
\[
\phi(e_1, e_2) = -\delta e_1 + \beta e_2, \quad \phi(e_2, e_1) = 0.
\]
Comparing with the general 2-cocycle:
\[
\phi(e_1, e_2) = a e_1, \quad \phi(e_2, e_1) = c e_1.
\]
For \( \phi \) to be a coboundary, we must have \( c = 0 \) and \( a = -\delta \), \( \beta = 0 \). Therefore, the cohomology is determined by the parameter \( c \), which is not constrained by the coboundary condition. Therefore:
\[
H^2_{\text{pre}}(P, P) \cong \Bbbk.
\]
\end{itemize}
\end{example}

\section{Lie and pre-Lie algebras}
\begin{theorem}[\!\!{\cite[Example 3.2]{GubarevKolesnikov}}]\label{thm6.1}
Let $(A, \cdot_A)$ be a  Perm algebra and $P$ be a vector space equipped with a bilinear operation $\cdot_P : P \otimes P \to P$. 
Define the bracket on $A \otimes P$ by
\[
[a_1 \otimes p_1, a_2 \otimes p_2] := (a_1 \cdot_A a_2) \otimes (p_1 \cdot_P p_2) - (a_2 \cdot_A a_1) \otimes (p_2 \cdot_P p_1).
\]
Then:
\begin{enumerate}
    \item If $(P, \cdot_P)$ is a pre-Lie algebra, then $A \otimes P$ is a Lie algebra.
    \item Conversely, if $A \otimes P$ is a Lie algebra and $A$ is \textbf{free as a Perm algebra}, then $(P, \cdot_P)$ is a pre-Lie algebra.
\end{enumerate}
In particular, when $A$ is a free Perm algebra, $A \otimes P$ is a Lie algebra \textbf{if and only if} $(P, \cdot_P)$ is a pre-Lie algebra.
\end{theorem}

\begin{theorem}
Let $A$ be a free Perm algebra, $P$ be a pre-Lie  algebra, and $N$ be a vector space. Then the tensor product $A \otimes N$ with action
\[
(a \otimes n) \cdot (b \otimes c) := (a  b) \otimes (n  c) - (b  a)\otimes(c n )
\]
\[
(b \otimes c) \cdot (a \otimes n) := (b  a) \otimes (c  n) - (a  b)\otimes(n c)
\]
is a module over $A\otimes P$ if $N$ is a module over $P$. 
\end{theorem}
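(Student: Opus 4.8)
\section*{Proof proposal}

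The plan is to reproduce, at the level of the module axiom, the argument already carried out for the bracket in Theorem~\ref{thm6.1}. Recall that a module over the Lie algebra $A\otimes P$ (equipped with the bracket of that theorem) is a space $V$ with a bilinear action satisfying
\[
[g_1,g_2]\cdot v = g_1\cdot(g_2\cdot v)-g_2\cdot(g_1\cdot v),\qquad g_1,g_2\in A\otimes P,\ v\in V.
\]
The two one-sided formulas in the statement assemble, by the usual antisymmetrization that promotes a pre-Lie bimodule to a module over the induced Lie algebra, into a single Lie action; writing the $A$-bimodule operations on $M$ as $am$, $ma$ and the pre-Lie bimodule operations on $N$ as $b\cdot n$, $n\cdot b$, I would take
\[
(a\otimes b)\cdot(m\otimes n):=(am)\otimes(b\cdot n)-(ma)\otimes(n\cdot b)
\]
and verify the displayed identity with $v=m\otimes n$. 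Since only the \emph{if} direction is claimed, I may assume from the start that $N$ is a genuine pre-Lie bimodule over $P$, so that the two pre-Lie bimodule axioms of the relevant definition are available. Bilinearity and well-definedness are immediate, so the whole content is the compatibility identity.

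First I would expand the left-hand side $[a_1\otimes b_1,a_2\otimes b_2]\cdot(m\otimes n)$ by inserting the bracket
\[
[a_1\otimes b_1,a_2\otimes b_2]=(a_1a_2)\otimes(b_1b_2)-(a_2a_1)\otimes(b_2b_1)
\]
and then applying the action; this produces four tensor monomials, each of the shape (a length-three $A$-word acting on $m$) $\otimes$ (a length-three $P$/$N$-word acting on $n$). Next I would expand the right-hand side, i.e.\ the two nested actions $(a_1\otimes b_1)\cdot\bigl((a_2\otimes b_2)\cdot(m\otimes n)\bigr)$ and its $1\leftrightarrow 2$ swap, each contributing four monomials, for about a dozen terms to reconcile in all.

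The matching is then executed exactly as in Theorem~\ref{thm6.1}: the $A$-parts are normalized by the Perm bimodule identities $(am)b=a(mb)=(ab)m$ and $(mb)c=m(bc)=m(cb)$, which allow every inner factor of an $A$-word acting on $M$ to be reordered freely while only the distinguished left-most letter is remembered. After this normalization the surviving monomials split into groups indexed by their $A$-type, and within each group the $M\otimes N$-coefficients cancel precisely by the two pre-Lie bimodule axioms for $N$ over $P$ — the same identities that drove the cancellations in the Jacobi computation. I expect the main obstacle to be purely organizational, namely the sign- and order-bookkeeping across these monomials; the genuine point is the recognition that the Perm bimodule axioms collapse all $A$-dependence to the choice of leading letter, so that the residual relations in the $N$-factor are exactly the pre-Lie bimodule axioms, giving the module property with no further hypotheses.
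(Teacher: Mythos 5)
Your proposal is correct and follows exactly the route the paper intends: its own proof of this theorem is the single line ``Similar way to prove Theorem~\ref{thm6.1}'', and what you describe --- expanding the module compatibility identity, normalizing the $A$-words with the Perm bimodule relations $(am)b=a(mb)=(ab)m$ and $(mb)c=m(bc)=m(cb)$, and cancelling the residual $N$-factors via the two pre-Lie bimodule axioms --- is precisely the transcription of that Jacobi computation to the module setting. You in fact supply more detail than the paper does, including the sensible correction of the statement's garbled action formulas to their antisymmetrized form.
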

\begin{proof}
    This follows using an argument similar to the proof of Theorem  \ref{thm6.1}.
\end{proof}
\begin{definition}[\!\!\cite{ch}]
 Let \( \mathfrak{g} \) be a Lie algebra over a field \( k \), and let \( M \) be a \( \mathfrak{g} \)-module (i.e., a vector space equipped with a Lie algebra action of \( \mathfrak{g} \)). The Chevalley-Eilenberg complex is used to define the cohomology groups \( H^n(\mathfrak{g}, M) \).

The \( n \)-th cochain group \( C^n(\mathfrak{g}, M) \) consists of alternating \( n \)-linear maps (cochains):
\[
f: \mathfrak{g} \times \mathfrak{g} \times \cdots \times \mathfrak{g} \to M,
\]
and the coboundary operator 
\[
\delta_{Lie}: C^n(\mathfrak{g}, M) \to C^{n+1}(\mathfrak{g}, M)
\]
is given by:
\[
\begin{aligned}
    (\delta_{Lie}f)(x_1, \dots, x_{n+1}) &= \sum_{i \prec  j} (-1)^{i+j} f([x_i, x_j], x_1, \dots, \hat{x_i}, \dots, \hat{x_j}, \dots, x_{n+1}) \\
    & + \sum_{i} (-1)^{i+1} x_i \cdot f(x_1, \dots, \hat{x_i}, \dots, x_{n+1}),
\end{aligned}
\]
where \( \hat{x_i} \) means that \( x_i \) is omitted.

For $n=1$:
\[
(\delta_{Lie}f)(x_1, x_2) = -f([x_1, x_2]) + x_1 \cdot f(x_2) - x_2 \cdot f(x_1).
\]
For $n=2$:
\[
\begin{aligned}
    (\delta_{Lie}f)(x_1, x_2, x_3) &= -f([x_1, x_2], x_3) + f([x_1, x_3], x_2) - f([x_2, x_3], x_1) \\
    &+ x_1 \cdot f(x_2, x_3) - x_2 \cdot f(x_1, x_3) + x_3 \cdot f(x_1, x_2).
\end{aligned}
\]
\begin{example}

For the Lie algebra \(\mathfrak{g}\) with basis \(\{e_1, e_2\}\) and bracket \([e_1, e_2] = e_1\), all other brackets zero: \[H^2(\mathfrak{g},\mathfrak{g}) \cong \Bbbk.\]
\end{example}
\end{definition}
\begin{theorem}
Let \((A, \cdot_A)\) be a \textbf{Perm algebra}, let \((P, \prec, \succ)\) be a \textbf{pre-Lie algebra}, and let \(N\) be a \(P\)-bimodule. Then there exists a cochain map
\[
\Psi : C^*_{\mathrm{Pre}}(P, N) \rightarrow C^*_{\mathrm{Lie}}(A \otimes P, A \otimes N)
\]
from the pre-Lie algebra cochain complex of \(P\) with coefficients in \(N\) to the Lie cochain complex of the tensor product algebra \(A \otimes P\) with coefficients in \(A \otimes N\).
\end{theorem}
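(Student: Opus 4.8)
The plan is to reproduce the strategy of Theorem \ref{5.4}, replacing the bracket-free associative computation by its antisymmetrized Lie analogue. The first step is to write down $\Psi$ explicitly on a pre-Lie $n$-cochain $\psi \in C^n_{\mathrm{Pre}}(P,N)$. Recall that such a $\psi$ is linear in its first argument (the module-action slot) and alternating in the remaining $n-1$ arguments, whereas a Lie cochain on $A\otimes P$ must be totally alternating. The natural way to pass from the former to the latter is to place each input in turn into the distinguished slot and antisymmetrize:
\[
\Psi(\psi)(x_0\otimes a_0,\ldots,x_{n-1}\otimes a_{n-1}) = \sum_{j=0}^{n-1} (-1)^{j}\, x_j x_0\cdots \widehat{x_j}\cdots x_{n-1}\otimes \psi(a_j, a_0,\ldots,\widehat{a_j},\ldots,a_{n-1}),
\]
where $x_j$ occupies the leading position of the Perm monomial. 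In low degree this gives $\Psi(\psi)(x_0\otimes a_0)=x_0\otimes\psi(a_0)$ and $\Psi(\psi)(x_0\otimes a_0,x_1\otimes a_1)=x_0x_1\otimes\psi(a_0,a_1)-x_1x_0\otimes\psi(a_1,a_0)$, matching the shape of the Lie bracket of Theorem \ref{thm6.1}. The action of $A\otimes P$ on $A\otimes N$ used here is the Lie-module structure induced from the $P$-bimodule $N$, exactly parallel to the algebra structure of Theorem \ref{thm6.1}.

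Next I would verify that $\Psi(\psi)$ is a genuine Lie cochain, i.e. that it is alternating. Interchanging two inputs reorders the tensor factors in $A$; because the Perm identity $(x_px_q)x_r=x_p(x_qx_r)=x_p(x_rx_q)$ permits permuting freely every factor after the leading one, each summand is carried to a summand of the swapped expression. The sum over the distinguished index $j$, the signs $(-1)^j$, and the alternation already present in the last $n-1$ slots of $\psi$ combine to produce the overall factor $-1$, so $\Psi$ indeed lands in $C^n_{\mathrm{Lie}}(A\otimes P,A\otimes N)$.

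The core of the argument is the cochain-map identity $\delta_{\mathrm{Lie}}\circ\Psi=\Psi\circ\delta_{\mathrm{pre}}$. I would expand $\delta_{\mathrm{Lie}}(\Psi\psi)$ by the Chevalley--Eilenberg formula into its bracket terms $\Psi(\psi)([y_i,y_j],\ldots)$ and its action terms $y_i\cdot\Psi(\psi)(\ldots)$. Substituting $[x_i\otimes a_i,x_j\otimes a_j]=(x_ix_j)\otimes(a_i\cdot_P a_j)-(x_jx_i)\otimes(a_j\cdot_P a_i)$ splits every bracket term into two pieces; after normalising each monomial to leading-element form via the Perm identity, these pieces reorganise precisely into the four families of $\delta_{\mathrm{pre}}$: the composition sum $\psi(a_0\cdot_P a_i,\ldots)$, the inner-bracket sum $\psi(\ldots,[a_i,a_j],\ldots)$, and the two outer module-action sums $a_0\cdot\psi(\ldots)$ and $\psi(\ldots)\cdot a_i$. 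As a preliminary check I would confirm the degree-one and degree-two cases against the explicit $\delta_{\mathrm{pre}}$ formulas displayed above, mirroring the $n=3$ verification carried out in Theorem \ref{5.4}.

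The main obstacle is the sign- and monomial-bookkeeping in this final step. In contrast to the associative setting, both differentials carry alternating signs and antisymmetrizations, so the two contributions of each Lie bracket must be tracked separately and shown to land, respectively, in the $\psi(a_0\cdot_P a_i,\ldots)$ terms and in the $\psi(\ldots,[a_i,a_j],\ldots)$ terms. The Perm identity is exactly what guarantees that the many distinct monomials generated by reordering in $A$ collapse onto the single leading-monomial representatives appearing in $\Psi(\delta_{\mathrm{pre}}\psi)$, so the only genuinely delicate point is checking that the residual signs agree; freeness of $A$ then upgrades $\Psi$ to an injection, as in the pre-associative case.
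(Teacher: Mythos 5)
Your proposal follows essentially the same route as the paper: your $\Psi$, given by $\sum_{j}(-1)^{j}\,x_j x_0\cdots\widehat{x_j}\cdots x_{n-1}\otimes\psi(a_j,a_0,\ldots,\widehat{a_j},\ldots)$, coincides in the degrees the paper actually verifies (modulo the Perm identity and the alternation of pre-Lie cochains in their last $n-1$ slots) with the paper's map, and your verification of $\delta_{\mathrm{Lie}}\circ\Psi=\Psi\circ\delta_{\mathrm{pre}}$ --- expanding the Chevalley--Eilenberg differential, splitting each bracket $[x_i\otimes a_i,x_j\otimes a_j]$ into its two pre-Lie contributions, and collapsing monomials via the Perm identity, with freeness of $A$ giving injectivity --- is exactly the paper's argument. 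If anything, your closed-form definition and your explicit check that $\Psi(\psi)$ is alternating (a point the paper omits, and where its ad hoc list of terms is harder to certify) make the construction cleaner.
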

\begin{proof}
     Given a pre-Lie  cochain $f \in C^n_{\mathrm{Pre}}(P, N)$, for $x_1,\dots,x_n\in A$ and $b_1,\dots,b_n\in P$, define:
\begin{longtable}{@{}l@{}l@{}}
% First row: LHS and first term
$\displaystyle \Psi(f)\big((x_1 \otimes b_1, x_2 \otimes b_2,$
& $\dots,x_{n}\otimes b_{n})\big) :=\displaystyle x_1x_2\dots x_{n} \otimes f(b_1 , b_2,\dots,b_{n})$ \\[4pt]
& $\displaystyle - x_2x_1x_3\dots x_{n} \otimes f(b_2, b_1,\dots,b_{n})$ \\[4pt]
& $\displaystyle + \sum_{i=2}^{n-2}(-1)^i x_{i+1}\dots x_{1}x_{i+2}\dots x_{n}\otimes f(b_1,\dots b_{i-1},b_{i+1},b_i,b_{i+2},\dots ,b_{n})$ \\[4pt]
& $\displaystyle + (-1)^{n-1}x_nx_{n-1}\dots x_2x_1\otimes f(b_n,b_{n-1},\dots ,b_2,b_1)$
\end{longtable}
 
We will show that $\Psi$ commutes with the  differentials:
\[
\delta_{Lie}(\Psi f)=\Psi(\delta_{Pre}f)
\]
\begin{itemize}
    \item 

For $n=2$
\[
\begin{aligned}
\delta_{\text{Lie}}(\Psi f)(x_1 \otimes b_1, x_2 \otimes b_2) &= -\Psi f([x_1 \otimes b_1, x_2 \otimes b_2]) \\
&+ (x_1 \otimes b_1) \cdot \Psi f(x_2 \otimes b_2) - (x_2 \otimes b_2) \cdot \Psi f(x_1 \otimes b_1)\\
&=- (x_1 \cdot_A x_2) \otimes  f\big((b_1 \cdot_P b_2)\big) + (x_2 \cdot_A x_1)\otimes  f\big((b_2 \cdot_P b_1)\big)\\
&+(x_1 \cdot_A x_2)\otimes b_1f(b_2)-(x_2 \cdot_A x_1)\otimes f(b_2)b_1\\
&-(x_2 \cdot_A x_1)\otimes b_2f(b_1)+(x_1 \cdot_A x_2)\otimes f(b_1)b_2\\
&=(x_1 \cdot_A x_2)\otimes\Big( -f(b_1 \cdot_P b_2)+b_1f(b_2)+f(b_1)b_2\Big)\\
&-(x_2 \cdot_A x_1)\Big(-f(b_2 \cdot_P b_1)+b_2f(b_1)+f(b_2)b_1\Big)\\
&=(x_1 \cdot_A x_2)\otimes (\delta_{pre}f)(b_1,b_2)-(x_2 \cdot_A x_1)\otimes (\delta_{pre}f)(b_2,b_1)\\
&=\Psi(\delta_{Pre}f)(x_1 \otimes b_1, x_2 \otimes b_2)
\end{aligned}
\]
\item For $n=3.$
 Apply \(\delta_{\text{Lie}}\) to \(\Psi(f)\):
   \[
   (\delta_{\text{Lie}} \Psi(f))(x_1 \otimes b_1, x_2 \otimes b_2, x_3 \otimes b_3) 
   \]
   has six terms as per the definition of \(\delta_{\text{Lie}}\).

1. First Term: \(- \Psi(f)([x_1 \otimes b_1, x_2 \otimes b_2], x_3 \otimes b_3)\):
   - Compute the bracket:
     \[
     [x_1 \otimes b_1, x_2 \otimes b_2] = (x_1 x_2) \otimes (b_1 \cdot_P b_2) - (x_2 x_1) \otimes (b_2 \cdot_P b_1).
     \]
   - Apply \(\Psi(f)\):
     \[
     \Psi(f)\big(([x_1 \otimes b_1, x_2 \otimes b_2], x_3 \otimes b_3)\big) 
     = \Psi(f)\big((x_1 x_2 \otimes (b_1 \cdot_P b_2) - x_2 x_1 \otimes (b_2 \cdot_P b_1), x_3 \otimes b_3)\big).
     \]
     This splits into two applications of \(\Psi(f)\):
     \[
     \begin{aligned}
     -\Psi(f)\big(([x_1 \otimes b_1, x_2 \otimes b_2], x_3 \otimes b_3)\big) &= -(x_1 x_2) x_3 \otimes f(b_1 \cdot_P b_2, b_3) \\
     &+ x_3 (x_1 x_2) \otimes f(b_3, b_1 \cdot_P b_2) \\
     &+ (x_2 x_1) x_3 \otimes f(b_2 \cdot_P b_1, b_3) \\
     &- x_3 (x_2 x_1) \otimes f(b_3, b_2 \cdot_P b_1).
      \end{aligned}
     \]

2. Second Term: 
     \[
     \begin{aligned}
     \Psi(f)\big(([x_1 \otimes b_1, x_3 \otimes b_3], x_2 \otimes b_2)\big) 
     &= (x_1 x_3) x_2 \otimes f(b_1 \cdot_P b_3, b_2) \\
     &- x_2 (x_1 x_3) \otimes f(b_2, b_1 \cdot_P b_3)\\ 
     &+ (x_3 x_1) x_2 \otimes f(b_3 \cdot_P b_1, b_2)\\
     &- x_2 (x_3 x_1) \otimes f(b_2, b_3 \cdot_P b_1).
      \end{aligned}
     \]

3. Third Term:
     \[
      \begin{aligned}
    - \Psi(f)\big(([x_2 \otimes b_2, x_3 \otimes b_3], x_1 \otimes b_1)\big) 
     &= -(x_2 x_3) x_1 \otimes f(b_2 \cdot_P b_3, b_1) \\
     & + x_1 (x_2 x_3) \otimes f(b_1, b_2 \cdot_P b_3) \\
     &+ (x_3 x_2) x_1 \otimes f(b_3 \cdot_P b_2, b_1) \\
     & - x_1 (x_3 x_2) \otimes f(b_1, b_3 \cdot_P b_2).
     \end{aligned}
     \]

4. Fourth Term: \(+ (x_1 \otimes b_1) \cdot \Psi(f)(x_2 \otimes b_2, x_3 \otimes b_3)\):

   - First compute \(\Psi(f)(x_2 \otimes b_2, x_3 \otimes b_3)\):
     \[
     \Psi(f)(x_2 \otimes b_2, x_3 \otimes b_3) = x_2 x_3 \otimes f(b_2, b_3) - x_3 x_2 \otimes f(b_3, b_2).
     \]
   - Then act by \(x_1 \otimes b_1\):
     \[
     \begin{aligned}
     (x_1 \otimes b_1) \cdot (x_2 x_3 \otimes f(b_2, b_3) - x_3 x_2 \otimes f(b_3, b_2)) 
     &= x_1 (x_2 x_3) \otimes b_1  f(b_2, b_3) \\
     &-  (x_2 x_3) x_1\otimes  f(b_2, b_3)b_1.\\
     &- x_1 (x_3 x_2) \otimes b_1  f(b_3, b_2) \\
     &+  (x_3 x_2) x_1\otimes  f(b_3, b_2)b_1.
      \end{aligned}
     \]
5. Fifth Term:
     \[
     \begin{aligned}
     -(x_2 \otimes b_2) \cdot (x_1 x_3 \otimes f(b_1, b_3) - x_3 x_1 \otimes f(b_3, b_1)) 
     &= -x_2 (x_1 x_3) \otimes b_2  f(b_1, b_3)\\
     & +(x_1x_3)x_2 \otimes   f(b_1, b_3)b_2\\
     &+ x_2 (x_3 x_1) \otimes b_2  f(b_3, b_1)\\
     & - (x_3 x_1)x_2 \otimes   f(b_3, b_1)b_2.
      \end{aligned}
     \]

6. Sixth Term:
     \[
     \begin{aligned}
     (x_3 \otimes b_3) \cdot (x_1 x_2 \otimes f(b_1, b_2) - x_2 x_1 \otimes f(b_2, b_1)) 
     &= x_3 (x_1 x_2) \otimes b_3  f(b_1, b_2)\\
     & - (x_1 x_2)x_3 \otimes f(b_1, b_2)b_3\\
     &- x_3 (x_2 x_1) \otimes b_3  f(b_2, b_1)\\
     &+ (x_2 x_1)x_3 \otimes  f(b_2, b_1)b_3.
     \end{aligned}
     \]

 Final Expression: $ (\delta_{\text{Lie}} \Psi(f))(x_1 \otimes b_1, x_2 \otimes b_2, x_3 \otimes b_3)$
    \begin{longtable}{l}
        $\displaystyle = x_1x_2x_3\otimes\Big(-b_1 \cdot f(b_2, b_3) +b_1 \cdot f(b_3, b_2) +f(b_1 \cdot_P b_2, b_3) - f(b_1 \cdot_P b_3, b_2)$ \\
        $\displaystyle \phantom{= x_1x_2x_3\otimes\Big(} + f(b_1, [b_2, b_3]) - f(b_1, b_2) \cdot b_3 + f(b_1, b_3) \cdot b_2\Big)$ \\[4pt]
        $\displaystyle -x_2x_1x_3 \otimes\Big(-b_2 \cdot f(b_1, b_3) +b_2 \cdot f(b_3, b_1) +f(b_2 \cdot_P b_1, b_3) - f(b_2 \cdot_P b_3, b_1)$ \\
        $\displaystyle \phantom{-x_2x_1x_3 \otimes\Big(} + f(b_2, [b_1, b_3]) - f(b_2, b_1) \cdot b_3 + f(b_2, b_3) \cdot b_1\Big)$ \\[4pt]
        $\displaystyle + x_3x_2x_1\otimes\Big(-b_3 \cdot f(b_2, b_1) +b_3 \cdot f(b_1, b_2) +f(b_3 \cdot_P b_2, b_1) - f(b_3 \cdot_P b_1, b_2)$ \\
        $\displaystyle \phantom{+ x_3x_2x_1\otimes\Big(} + f(b_3, [b_2, b_1]) - f(b_3, b_2) \cdot b_1 + f(b_3, b_1) \cdot b_2\Big)$ \\[4pt]
        $\displaystyle =x_1x_2x_3\otimes (\delta_{Pre}f)(b_1,b_2,b_3)-x_2x_1x_3\otimes (\delta_{Pre}f)(b_2,b_1,b_3)$ \\
        $\displaystyle \phantom{=}+x_3x_2x_1\otimes (\delta_{Pre}f)(b_3,b_2,b_1)=\Psi(\delta_{Pre}f) (x_1 \otimes b_1, x_2 \otimes b_2, x_3 \otimes b_3)$
    \end{longtable}
        \item General case: Following the same pattern of proof used for the cases $n=2$ and $n=3$, the theorem can be established for all $n \geq 2$.
        \qedhere
        \end{itemize}
        \end{proof}
        \begin{remark}
        
 In the case where the Perm algebra \(A\) is free (see Remark \ref{re:3.5}), the cochain map \(\Psi\) is injective.
\end{remark}

\begin{corollary}
    Given the canonical embedding of cochain complexes
\[
C^*_{\mathrm{Pre}}(P,N) \hookrightarrow C^*_{\mathrm{Lie}}(A\otimes P,A\otimes N),
\]
we obtain a short exact sequence of complexes:
\[
0 \to C^*_{\mathrm{Pre}}(P,N) \hookrightarrow C^*_{\mathrm{Lie}}(A\otimes P,A\otimes N)\to Q^* \to 0,
\]
where $ Q^* = C^*_{\mathrm{Lie}}(A\otimes P,A\otimes N)/C^*_{\mathrm{Pre}}(P,N)$ is the quotient complex.
Applying the cohomology functor yields the long exact sequence:
\[
\begin{aligned}
\cdots \to H^{n-1}_{\mathrm{Pre}}(P,N) &\to H^{n-1}_{Lie}(A\otimes P,A\otimes N) \to H^{n-1}(Q^*) \\
&\to H^n_{\mathrm{Pre}}(P,N) \to H^n_{Lie}(A\otimes P,A\otimes N) \to H^n(Q^*) \\
&\to H^{n+1}_{Pre}(P,N) \to H^{n+1}_{Lie}(A\otimes P,A\otimes N) \to \cdots
\end{aligned}
\]
\end{corollary}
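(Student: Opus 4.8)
The plan is to reduce the corollary to a standard homological-algebra mechanism: an injective morphism of cochain complexes determines a short exact sequence with its cokernel as quotient complex, and every short exact sequence of complexes induces a long exact sequence in cohomology. Both inputs I need are supplied by the preceding material. The preceding theorem shows that $\Psi$ is a morphism of complexes, i.e.\ $\delta_{\mathrm{Lie}} \circ \Psi = \Psi \circ \delta_{\mathrm{Pre}}$, and the preceding remark records that $\Psi$ is injective when $A$ is free. Thus the substantive step is to justify the injectivity, after which everything is formal.

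First I would establish injectivity in each degree $n$. Suppose $\Psi(f) = 0$. By definition every summand of $\Psi(f)(x_1 \otimes b_1, \ldots, x_n \otimes b_n)$ has the form (a degree-$n$ Perm monomial in $x_1, \ldots, x_n$) $\otimes$ (a value of $f$ on a permuted tuple), and the monomial in the $k$-th summand has $x_k$ as its leftmost factor. In the free Perm algebra the relations $(x_i x_j) x_k = x_i(x_j x_k)$ and $x_i(x_j x_k) = x_i(x_k x_j)$ normalize every monomial to a right-normed product in which the trailing factors commute; hence a squarefree monomial in the distinct generators $x_1, \ldots, x_n$ is determined entirely by its leftmost factor. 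Consequently the $n$ monomials occurring in $\Psi(f)$, with leftmost factors $x_1, \ldots, x_n$, are linearly independent, so they lie in independent summands of $A \otimes N$. The equation $\Psi(f) = 0$ therefore forces each tensor component to vanish separately; in particular $x_1 x_2 \cdots x_n \otimes f(b_1, \ldots, b_n) = 0$ gives $f(b_1, \ldots, b_n) = 0$ for all $b_i$, whence $f = 0$.

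With $\Psi$ an injective cochain map, I would then set $Q^* := \mathrm{coker}(\Psi)$, equipped with the differential induced by $\delta_{\mathrm{Lie}}$; this is well defined because $\Psi$ being a cochain map makes $\mathrm{im}(\Psi)$ a subcomplex of $C^*_{\mathrm{Lie}}(A \otimes P, A \otimes N)$. This is exactly the short exact sequence of complexes in the statement. The long exact sequence then follows from the usual zig-zag lemma: the connecting map $H^n(Q^*) \to H^{n+1}_{\mathrm{Pre}}(P, N)$ sends the class of a $Q$-cocycle to the $\Psi$-preimage of its $\delta_{\mathrm{Lie}}$-image, which is a $\delta_{\mathrm{Pre}}$-cocycle by a standard diagram chase.

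I expect the injectivity to be the only real obstacle, and within it the linear independence of the permuted Perm monomials; everything downstream is formal. I would therefore make the normal form of the free Perm algebra—leftmost factor together with the unordered multiset of the remaining factors—explicit, since it is precisely this normal form that separates the distinct positions appearing in the definition of $\Psi$ and thereby guarantees that $\Psi$ detects every permuted evaluation of $f$.
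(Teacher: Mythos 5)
Your proposal is correct and follows exactly the route the paper intends: the paper states this corollary without any proof, relying on the standard fact that an injective cochain map yields a short exact sequence of complexes and hence a long exact cohomology sequence. Your argument moreover supplies the one substantive ingredient -- the injectivity of $\Psi$ when $A$ is a free Perm algebra, via the normal form in which a squarefree monomial is determined by its leftmost factor -- which the paper merely asserts in the preceding remark; note only that the corollary as printed omits the freeness hypothesis on $A$, which your argument (correctly) requires.
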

 \begin{example}
Let:
\begin{itemize}
\item $A = F\langle x,y\rangle$ be the free Perm algebra.
\item $P$ be  pre-Lie algebra with basis $\{e_1,e_2\}$ and operations:
\begin{align*}
e_1\cdot_P e_2 &= e_1 \\
e_i \cdot_P e_j &= 0\quad \textit{for other.}
\end{align*}
\end{itemize}
A pre-Lie 2-cochain \( f:  P \otimes  P \to  P \) can be expressed as (see  \ref{exm:5.4}):
\[
f(e_1, e_2) = a e_1, \quad f(e_2, e_1) = c e_1.
\] 
A Lie 2-cochain \( \phi: (A \otimes P) \otimes (A \otimes P) \to (A \otimes P) \) can be expressed in terms of \( f \) as:
\[
\phi(x \otimes e_1, y \otimes e_2) = a xy \otimes e_1 + c yx \otimes e_1.
\] 
\end{example}

{\small
    
}


\begin{thebibliography}{1}
        \bibitem{Gnedbaye2020}
        V.~G. Allahtan, K.~Aslaou, and D.~Djagwa.
        \newblock Differential forms and cohomology of {Perm} algebras.
        \newblock {\em IOSR Journal of Mathematics (IOSR-JM)}, 16(1):9--20, 2020.
        
        \bibitem{Chapoton2001}
        F.~Chapoton.
        \newblock Un endofoncteur de la cat\'egorie des op\'erades.
        \newblock In {\em Dialgebras and Related Operads}, volume 1763 of {\em Lecture
          Notes in Mathematics}, pages 105–110. Springer-Verlag, 2001.
        
        \bibitem{ch}
        C.~Chevalley and S.~Eilenberg.
        \newblock Cohomology theory of Lie groups and Lie algebras.
        \newblock {\em Transactions of the American Mathematical Society},
          63(1):85--124, 1948.
        
        \bibitem{Das}
        A.~Das.
        \newblock Cohomology and deformations of dendriform algebras, and ${\rm Dend}_\infty$-algebras.
        \newblock {\em Communications in Algebra}, 50(4):1544--1567, 2022.
        
        \bibitem{Dzhumadil’daev}
        A.~Dzhumadil'daev.
        \newblock Cohomologies and deformations of right-symmetric algebras.
        \newblock {\em Journal of Mathematical Sciences}, 93(6):836--876, 1999.
        
        \bibitem{Gerstenhaber}
        M.~Gerstenhaber.
        \newblock The cohomology structure of an associative ring.
        \newblock {\em Annals of Mathematics}, 78(2):267--288, 1963.
        
        \bibitem{GubarevKolesnikov}
        V.~Y. Gubarev and P.~S. Kolesnikov.
        \newblock Operads of decorated trees and their duals.
        \newblock {\em Commentarii Mathematici Universitatis Carolinae},
          55(4):421--445, 2014.
        
        \bibitem{kolesnikov}
        P.~S. Kolesnikov.
        \newblock Gr\"obner-Shirshov bases for pre-associative algebras.
        \newblock {\em Communications in Algebra}, 45(12):5283--5296, 2017.
        
        \bibitem{Loday}
        J.-L. Loday.
        \newblock Dialgebras.
         \newblock In {\em Dialgebras and Related Operads}, volume 1763 of {\em Lecture
          Notes in Mathematics}, pages 7--66. Springer-Verlag, 2001.
    \end{thebibliography}
\end{document}